\numberwithin{equation}{section}
\numberwithin{figure}{section}
\crefname{hypothesis}{Hypothesis}{Hypotheses}
\crefname{fact}{Fact}{Facts}
\title{\small A Third-order Conservative Semi-Lagrangian Discontinuous Galerkin Scheme For the Transport Equation on Curvilinear Unstructured Meshes\footnotemark[1]}
\author{Xiaofeng Cai\footnotemark[2] \footnotemark[3] \and Yibing Chen\footnotemark[4] \and Kunkai Fu\footnotemark[3] \footnotemark[5] \and Liujun Pan\footnotemark[6]}
\begin{document}
	
	\maketitle
	
	\footnotetext[1]{\funding{The work of the first author and the third author were partially supported by the NSFC (No 12201052), National Key Laboratory for Computational Physics (6142A05230201), the Guangdong Provincial Key Laboratory of IRADS (2022B1212010006), Guangdong basic and applied basic research foundation [2025A1515012182], Guangdong and Hong Kong Universities "1+1+1" Joint Research Collaboration Scheme [2025A0505000014].}}
	\footnotetext[2]{Department of Mathematics, Faculty of Arts and Sciences, Beijing Normal University, Zhuhai, 519087, P.R. China. (\email{xfcai@bnu.edu.cn}).}
	\footnotetext[3]{Guangdong Provincial/Zhuhai Key Laboratory of Interdisciplinary Research and Application for Data Science, Beijing Normal-Hong Kong Baptist University, Zhuhai, 519087, P.R. China.}
	\footnotetext[4]{Institute of Applied Physics and Computational Mathematics and National Key laboratory of Computational physics, and Center for Applied Physics and Technology, Peking University, P.O. Box 8009, Beijing 100088, P.R. China.}
	\footnotetext[5]{Hong Kong Baptist University, Kowloon Tong, Hong Kong. (\email{fukunkai@gmail.com}).}
   \footnotetext[6]{Institute of Applied Physics and Computational Mathematics and National Key laboratory of Computational physics, P.O. Box 8009, Beijing 100088, P.R. China.}
	% REQUIRED
	\begin{abstract}
		We develop a third-order conservative semi-Lagrangian discontinuous Galerkin (SLDG) scheme for solving linear transport equations on curvilinear unstructured triangular meshes, tailored for complex geometries. To ensure third-order spatial accuracy while strictly preserving mass, we develop a high-order conservative intersection-based remapping algorithm for curvilinear unstructured meshes, which enables accurate and conservative data transfer between distinct curvilinear meshes.
		Incorporating this algorithm, we construct a non-splitting high-order SLDG method equipped with weighted essentially non-oscillatory  and positivity-preserving limiters to effectively suppress numerical oscillations and maintain solution positivity. 
		For the linear problem, the semi-Lagrangian update enables large time stepping, yielding an explicit and efficient implementation.
		Rigorous numerical analysis confirms that our scheme achieves third-order accuracy in both space and time, as validated by consistent error analysis in terms of $L^1$ and $L^2$-norms.
		Numerical benchmarks, including rigid body rotation and swirling deformation flows with smooth and discontinuous initial conditions, validate the scheme's accuracy, stability, and robustness.
	\end{abstract}
	
	% REQUIRED
	\begin{keywords}
		intersection-based remapping, semi-Lagrangian, discontinuous Galerkin, mass conservation, curvilinear unstructured meshes, high order in both space and time
	\end{keywords}
	
	% REQUIRED
	\begin{MSCcodes}
	65M08,	65M60, 65M25, 76M12
	\end{MSCcodes}
	
	\section{Introduction}

	Transport processes pervade computational physics, ranging from fluid dynamics \cite{arbogast2011stability,ma2022fourth}  to atmospheric modeling \cite{lauritzen2010conservative,nair2002efficient}. Researchers have make many attempt to solve the transport equation.
	Classical schemes include the upwind method
	\cite{courant1952solution,ocher1982upwind} and Lax-Friedrichs scheme \cite{lax1954weak}. MUSCL scheme \cite{van1979towards} extend Godunov scheme \cite{godunov1959finite} to second-order. The total variation diminishing scheme \cite{harten1983high} start the age of high-resolution. The essentially non-oscillatory and weighted essentially non-oscillatory (WENO) schemes \cite{harten1997uniformly,liu1994weighted} have been designed for maintaining both uniform high order accuracy and an essentially non-oscillatory shock transition. The Runge-Kutta (RK) discontinuous Galerkin (DG) method \cite{cockburn2001runge} effectively leverages the strengths of DG framework, owning compactness, high resolution \cite{reinarz2020exahype}, parallel efficiency, and flexibility for dealing with unstructured meshes \cite{chen2020review}. 
	
	Despite these methods, Semi-Lagrangian (SL) schemes \cite{ewing2001summary,morton1998analysis,sun2025fourth} can overcome the restriction of a Courant-Friedrichs-Lewy (CFL) stability condition. SL methods had been applied to numerous areas, from environmental engineering \cite{kumar1995semi,yearsley2009semi} and image processing \cite{carlini2007semi} to plasma physics \cite{besse2003semi,grandgirard20165d}. However, a well-known drawback of SL methods is that they do not inherently ensure local mass conservation \cite{courant1952solution}. This shortcoming is particularly problematic for complex systems such as multi-tracer transport in chemistry–climate models, where strict conservation of each tracer is required. To address this issue, researchers have developed flux-form SL schemes \cite{dukowicz2000incremental,lin1996multidimensional,restelli2006semi} that  approximating the reconstruction of the flux through the control volume boundary, another strategy is based on conservative remapping \cite{hirt1974arbitrary}, which is discretized by approximating integral over upstream control volume \cite{laprise1995class,machenhauer1998design,zerroukat2002slice}. Combining the DG approach \cite{cockburn2012discontinuous,tumolo2013semi} with SL techniques yields the semi-Lagrangian DG (SLDG) method \cite{restelli2006semi,qiu2011positivity,rossmanith2011positivity} to maintain the local mass conservation. The SLDG method in \cite{guo2014conservative} is based on the weak characteristic Galerkin formulations in \cite{russell2002overview}. In particular, the SLDG schemes use the trace of the characteristics backward in time and well approximation for the upstream cell; such schemes in \cite{cai2017high,cai2022eulerian} can allow for huge time-stepping size. Besides, the potential loss of accuracy of traditional SL methods at low CFL numbers \cite{falcone1998convergence} disapears in SLDG schemes. Early multi-dimensional SLDG solvers often relied on dimensional splitting \cite{qiu2011positivity}, which is straightforward but introduces splitting errors and limited to Cartesian topologies.
	Subsequent developments moved toward non-splitting formulations that evolve multidimensional upstream elements via characteristic weak forms \cite{celia1990eulerian} to avoid such errors~\cite{cai2017high,restelli2006semi} and also suitable for fully unstructured meshes \cite{cai2022eulerian}. 
	
	Advances in mesh generation techniques have enabled the widespread use of unstructured meshes in computational fluid dynamics, computer-aided design, and computer graphics. Unstructured meshes can conform to complex geometries and topologies, accurately representing domains with curved surfaces or patches. They also allow fine local control of mesh size and node density, enabling high resolution of localized features such as discontinuities. And curvilinear unstructured meshes can provide extra precise approximation of complex geometries and topologies. However, the complex structure of curvilinear unstructured meshes increases the computational cost of certain operations (such as intersection searching) and poses challenges for designing high-order conservative remapping algorithms. In \cite{cai2022eulerian}, a second-order conservative SLDG scheme on unstructured meshes was proposed, which transfer solution between triangles, and the major challenge in achieving higher-order accuracy is the lack of a high-order conservative remapping algorithm on curvilinear unstructured meshes, the comparison of them is provided in Section~\ref{section:numerical}.
	
	As an important step in the SLDG transport solver, a robust remapping method must be high-order, conservative and preserve accuracy of discontinuities and positivity for complex domain. Compare with advection-based remapping method \cite{anderson2015monotonicity,lipnikov2020conservative} that solves a linear transport equation over a pseudo-time interval and flux-based remapping method \cite{boutin2011extension,dukowicz1984conservative,vaccaro2023applying} that exchanges flux that is evaluated by integrating solution over a swept region, intersection-based remapping method \cite{dukowicz1987accurate} computes the integral on overlapping area between meshes with different topological connectivities and free from CFL constraint, therefore owning more robustness and efficiency \cite{lipnikov2023conservative}. Recently, there are many research and progress in constructing intersection-based remapping algorithms for straight-side meshes \cite{kenamond2021positivity,kenamond2021intersection}. For 2D Lagrangian schemes with straight-side meshes, it can be at most second-order accurate \cite{cheng2007high}, which necessitates the development of curvilinear remapping method applied in curvilinear Lagrangian schemes and ALE methods. The implementation of finding accurate curvilinear intersection and update solution under DG framework with curvilinear elements poses a non-trivial problem for geometric complexity (such as curvilinear concave region) and challenge of interpolating high-order piecewise polynomials over curvilinear elements. In \cite{shashkov2024remapping}, a strategy is provided that divides complex cell into convex sub-cells and applies the allocation law of sets to identify intersections to achieve accurate remapping between quadratic curvilinear elements. Recently, a higher-order remapping method \cite{lei2025high} is developed under FVM framework. However, analogous methods for curvilinear unstructured meshes and DG framework have yet to be developed. 
	
	In this paper, we present a two-dimensional (2D) unsplit SLDG scheme for transport simulation that incorporates a novel intersection-based remapping method for curvilinear unstructured meshes.
	This approach is the first to achieve third-order accuracy for remapping between unstructured meshes with curved elements. 
	The proposed scheme is also the first third-order (in space and time) conservative method for curvilinear unstructured meshes that permits huge time steps without compromising stability. 
	To suppress spurious oscillations and preserve positivity, we use weighted essentially non-oscillatory (WENO) reconstructions \cite{zhu2013runge} together with a positivity-preserving (PP) limiter \cite{zhang2011maximum}. For the one-dimensional case with a constant coefficient, an optimal superconvergence result has been proven \cite{yang2020optimal}, but a rigorous two-dimensional analysis remains open. We provide a theoretical proof of consistency. We also verify the scheme's stability through numerical experiments and demonstrate that it attains third-order accuracy in both space and time. 
	
	The remainder of the paper is organized as follows. Section~\ref{section:sldg} presents the SLDG formulation. Section~\ref{section:remap} details the remapping algorithm. Section~\ref{section:analytical} presents theoretical properties of the proposed scheme and provides rigorous proof. Section~\ref{section:numerical} reports numerical results, followed by conclusions in Section~\ref{section:conclusion}.

\section{SLDG Scheme for Transport Equation} \label{section:sldg}
This section develops a 2D SLDG method on unstructured meshes. 
%In contrast to~\cite{cai2022eulerian}, we approximate upstream elements by \emph{quadratic–curvilinear} triangles, which enables third-order accuracy. 
%A rigorous consistency proof is provided at the end of the section.

\subsection{The SLDG formulation on Unstructured Meshes}
We consider the 2D linear transport equation,
	$$ 
		\frac{\partial u}{\partial t}+ \nabla_{x,y} \cdot ( \boldsymbol{V}(x, y, t) u )=0,
 \ \ (x,y)\in \Omega, 
	$$
with appropriate boundary conditions and the initial condition $u(x, y, 0)=u_0(x,y)$, where the velocity field $\boldsymbol{V}(x, y, t) = (a(x, y, t), b(x, y, t))$.

%	A set of triangular elements $K_j,\ \ j=1,...,J$ forms a fixed background grid, which is a triangulation of $\Omega$.
% Let $h=\sup\limits_{j=1}^Jd(K_j)$, where $d(K_j)$ denotes the diameter of $K_j$.
%The discontinuous Galerkin finite element space is defined as $V_h^k=\{v_h: v_h|_{K_j}\in P^k(K_j)\}$, where $P^k(K_j)$ is the space of polynomials of degree at most $k$ on $K_j$.
% In particular, let $n_k$ denote the dimension of $P^k(K_j)$, given by $n_k=\frac{(k+1)(k+2)}{2}$.
%Throughout, we adopt a set of local orthogonal basis functions on the triangular element $K$, as defined   in \cite{zhu2013runge}.
	
 Let $\{K_j\}_{j=1}^J$ be a triangulation of $\Omega$. Set $h=\max_{1\le j\le J}\mathrm{diam}(K_j)$, where $\mathrm{diam}(K_j)$ denotes the diameter of $K_j$. Define
$
V_h^k:=\{\,v_h:\ v_h|_{K_j}\in P^k(K_j)\,\},
$
where $P^k(K_j)$ denotes polynomials on $K_j$ of total degree $\le k$. The local dimension is $n_k=\tfrac{(k+1)(k+2)}{2}$. We use local orthogonal bases on each triangle $K$ as in~\cite{zhu2013runge}.

% To construct the SLDG scheme, we consider the function $\psi(x,y,t)$,  which satisfies the adjoint  problem \cite{celia1990eulerian,guo2014conservative} of the transport equation:
%	\begin{equation}\label{adjoint}
%		\left\{\begin{array}{l}\psi_t+a(x,y,t) \psi_x+b(x,y,t) \psi_y=0, \\ \psi\left(x,y,t^{n+1}\right)=\Psi(x,y),\ \ t \in\left[t^n, t^{n+1}\right],\end{array}\right.
%	\end{equation}
%	where the final value of the adjoint problem located in polynomial space, that is $\Psi(x,y)\in P^k(K_j)$. 
%	Along the characteristic trajectories $\gamma:=
%	\left\{
%	\begin{array}{rl}
%		\frac{dx}{dt}&=a(x,y,t)\\
%		\frac{dy}{dt}&=b(x,y,t)
%	\end{array}
%	\right.$, the total derivative of $\psi$ is
%	\begin{equation}\label{prop_adjoint}
%		\frac{d\psi}{dt}=\frac{\partial \psi}{\partial t}+\frac{\partial \psi}{\partial x}\frac{dx}{dt}+\frac{\partial \psi}{\partial y}\frac{dy}{dt}=\psi_t+a(x,y,t) \psi_x+b(x,y,t) \psi_y=0.
%	\end{equation}
%Thus, the test function $\psi$ remains constant along each characteristic curve.
%	\begin{figure}[H]
%		\centering
%		\includegraphics*[width=0.45\linewidth]{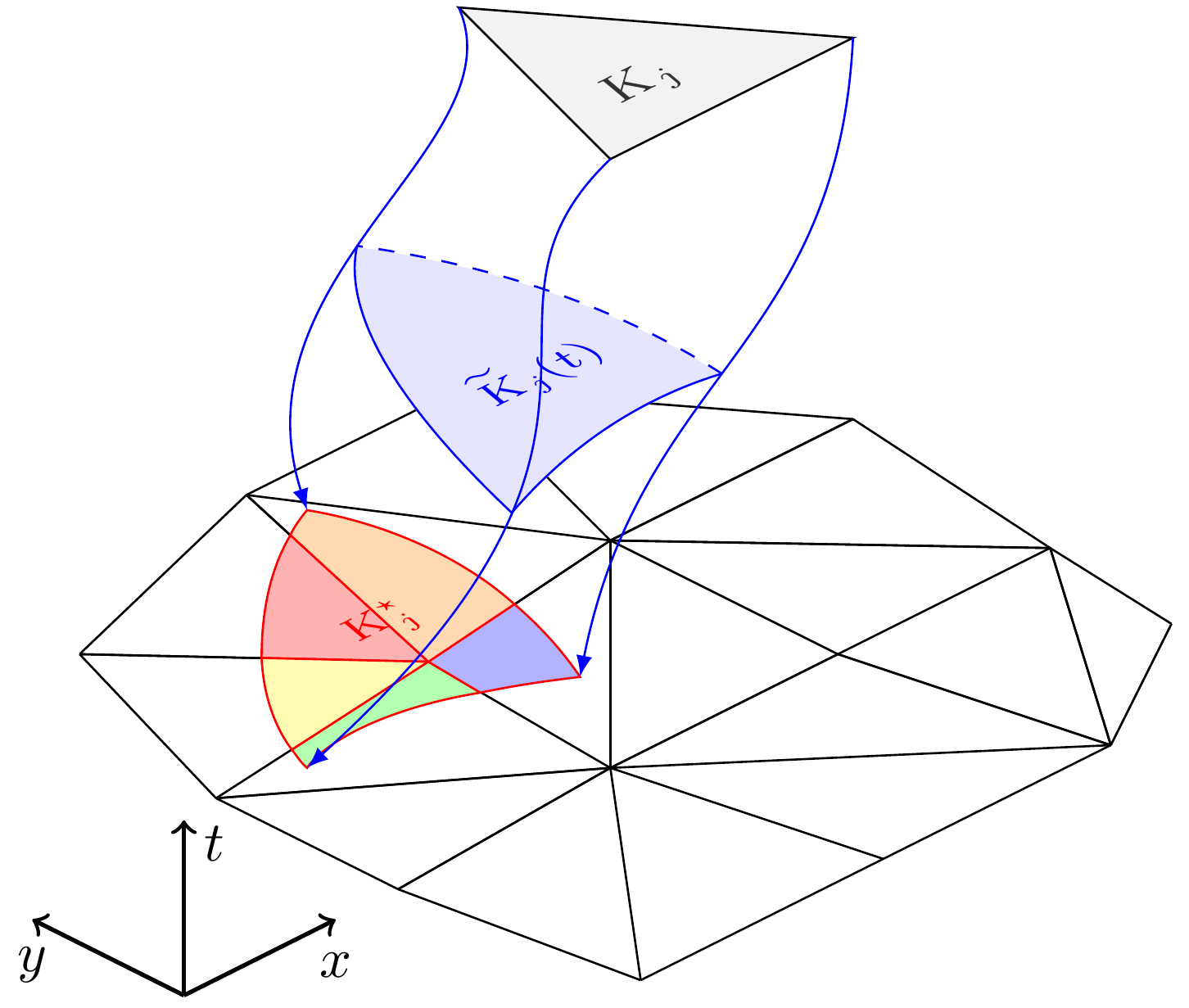}
%		\hspace{0.3in}
%		\includegraphics*[width=0.36\linewidth]{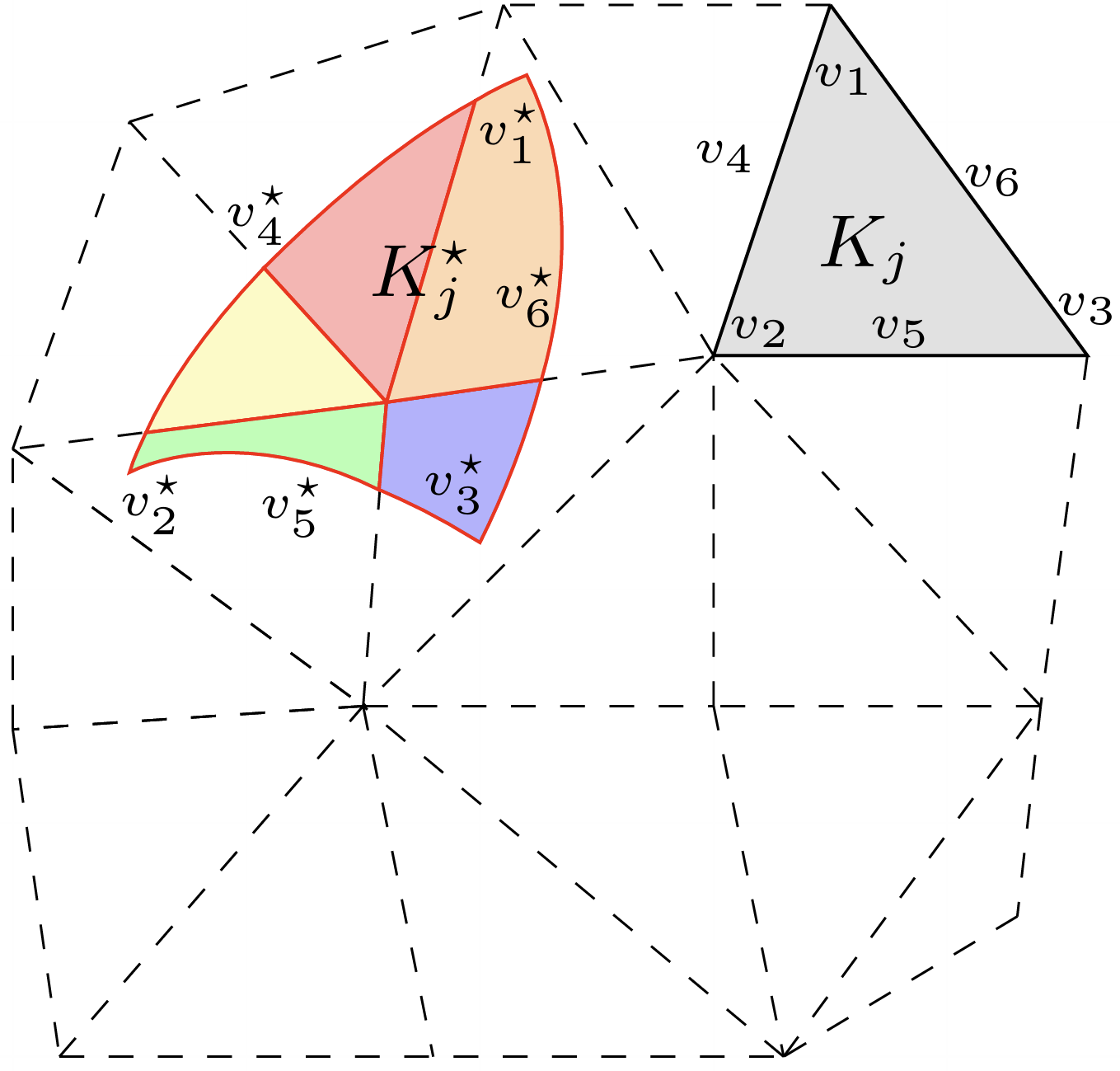}
%		\caption{Left panel shows an illustration of the space-time region $\widehat{K}_j(t)\times[t^n,t^{n+1}]$. Right panel shows nodes on Eulerian element $K_j$ and its upstream element $K_j^\star$.}
%		\label{unstructured triangle}
%	\end{figure}

To construct the SLDG formulation, introduce $\psi(x,y,t)$ as the solution of the adjoint problem~\cite{celia1990eulerian,guo2014conservative}:
\begin{equation}\label{adjoint}
\left\{
\begin{array}{l}
\psi_t+a(x,y,t)\psi_x+b(x,y,t)\psi_y=0,\qquad t\in[t^n,t^{n+1}],\\
\psi(x,y,t^{n+1})=\Psi(x,y)\in P^k(K_j).
\end{array}
\right.
\end{equation}
 Along characteristics, $\psi$ is constant.
By the Reynolds transport theorem, we have
	$$ \frac{d}{d t} \int_{\widetilde{K}_j(t)} u(x,y,t) \psi(x,y,t) dV =0,
	$$
	where $\widetilde{K}_j(t)$ is the dynamic region obtained by tracing the characteristics backward from the Eulerian element $K_j$ at $t^{n+1}$ along the characteristics.

\begin{figure}[ht]
  \centering
  \includegraphics[width=0.31\linewidth]{Figures/upstream_trajectory1}\hspace{0.3in}
  \includegraphics[width=0.26\linewidth]{Figures/characteristic_curve3.png}
  \includegraphics*[width=0.35\linewidth]{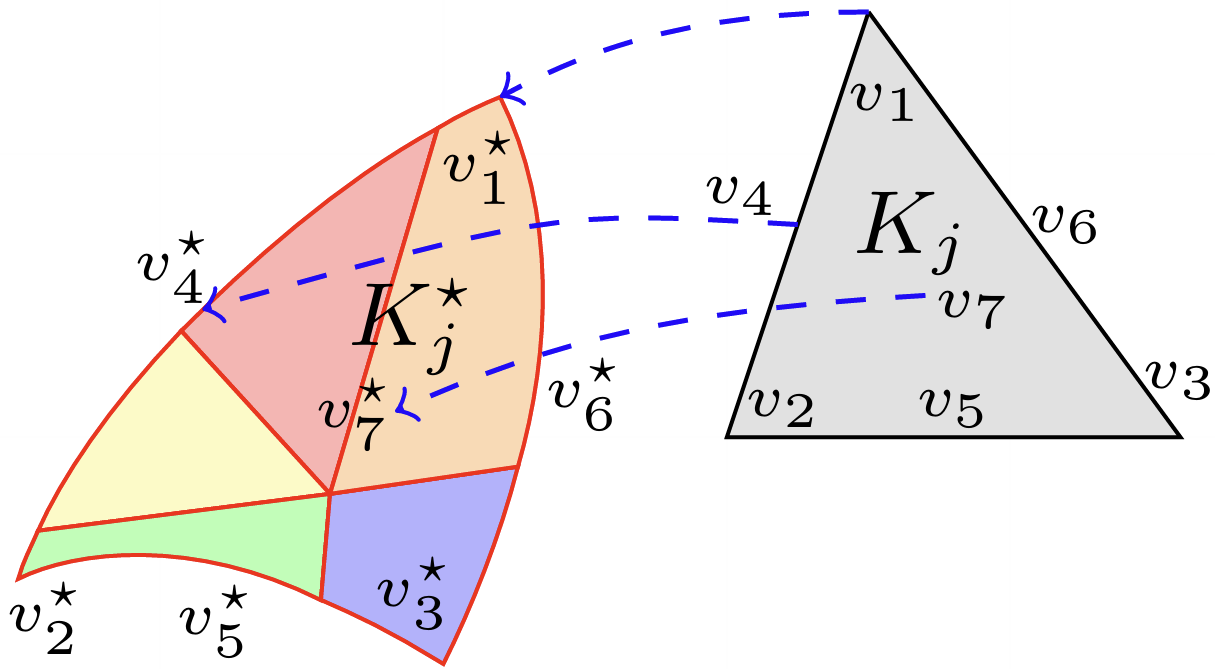}
  \caption{Left: space–time region $\widetilde{K}_j(t)\times[t^n,t^{n+1}]$. Middle: nodes on Eulerian element $K_j$ and its upstream element $K_j^\star$.
  Right: The points $v_q^\star$ on upstream element $K_j^\star$ for least squares approximation and their corresponding points $v_q$ on Eulerian element $K_j$ to evaluate the value of test function $\psi(x,y,t^n)$ on $v_q^\star$, $q=1,2,...,7$.
  }
  \label{unstructured triangle}
\end{figure}

The non-splitting SLDG scheme is then formulated as follows: given $u^n\in V_h^k$ at time level $t^n$, find $u^{n+1}|K_j\in V_h^k$ such that for $\forall\Psi\in P^k(K_j)$, we obtain
	\begin{equation}\label{sl}
		\int_{K_j} u^{n+1}\Psi(x,y)dxdy = \int_{K_j^\star} u^n\psi(x,y,t^n)dxdy,
	\end{equation}
where $K_j^\star$  represents the upstream element of $K_j$ traced backward from  $t^{n+1}$ to $t^n$.

\subsection{Implementation of the 2D SLDG on Unstructured Meshes}\label{procedure_sldg}

%A key task is the accurate and conservative evaluation of the right-hand side of \eqref{sl}, i.e., the integral over the upstream element $K_j^\star$ at time $t^n$. The procedure consists of four steps.

Key to the method is accurate, conservative evaluation of the right-hand side of \eqref{sl}: the integral over the upstream element $K_j^\star$ at $t^n$. The procedure has four components.

%	A key aspect of the SL scheme is the evaluation of the right-hand side (RHS) of equation \eqref{sl}, which involves three main components: 
%\begin{enumerate}
%\item[\romannumeral1.] Approximating the upstream region using a quadratic-curvilinear triangle to achieve third-order accuracy.
%\item[\romannumeral2.] Since  $u^n$ is discontinuous across the interfaces of the background mesh, the integral must be evaluated by considering each subregion separately.
%\item[\romannumeral3.] Reconstructing $\psi(x, y, t^n)$ using the adjoint property ~\eqref{prop_adjoint}.
%\end{enumerate}

%The steps of the SLDG algorithm are as follows:
	\begin{enumerate}
		\item \textbf{Characteristic tracing.} Locate the upstream nodes $v_i^\star, i=1,...,6$ by tracing the characteristics backward to time $t^n$:
		\begin{equation}\label{characteristic}
			\left\{
			\begin{array}{rl}
				&\frac{dx(t)}{dt}=a(x(t),y(t),t), \\
				&\frac{dy(t)}{dt}=b(x(t),y(t),t),\\
				&x(t^{n+1})=x(v_i),y(t^{n+1})=y(v_i),
			\end{array}
			\right.
		\end{equation}
		from the six nodes of $K_j: v_i, i=1,...,6$ backwards shown in Fig.\ref{unstructured triangle} with a high order RK method.
%		\begin{figure}[H]
%			\centering
%			\includegraphics*[width=0.3\linewidth]{Figures/charateristic_curve2.png}
%			\caption{The reference elements (left panel) and material elements (right panel) of a triangular mesh, as well as their corresponding relationships with control points.}
%			\label{characteritics1}
%		\end{figure}

	\item \textbf{Reconstruction of upstream element.} Construct a quadratic curve to approximate sides of upstream element by using a  quadratic isoparametric element TRIA6 \cite{dohrmann2004stabilized} with six control points, which is defined using six nodes $(x_i,y_i), i=1,...,6$ in the physical space and six shape functions $N_i(\xi,\eta)$.

		\begin{enumerate}
			\item Construct the shape function on reference element. To maintain a quadratic coordinate transformation at the boundary of the cell, the shape functions space is spanned by $\{1,\xi,\eta,\xi\eta,\xi^2,\eta^2\}$. The nodal values of the shape functions satisfy:
			\begin{equation*}\label{shapefunction}
				N_i(\xi_j,\eta_j)=a_i + b_i\xi_j + c_i \eta_j + d_i \xi_j\eta_j + e_i\xi_j^2 + f_i\eta_j^2 = \delta_{ij},\ \ i,j\in\{1,...,6\},
			\end{equation*}
			in which Kronecker symbol $
			\delta_{ij}=\left\{
			\begin{array}{l}
				1,\ \ i= j ,\\
				0,\ \ i\neq j.
			\end{array}
			\right.
			$\\
			So we have the shape functions of the TRIA6 element:
			$	N_1(\xi,\eta)=-3\xi+2\xi^2-3\eta+2\eta^2+4\xi\eta+1,  
				N_2(\xi,\eta)=-\xi+2\xi^2,  
				N_3(\xi,\eta)=-\eta+2\eta^2,  
				N_4(\xi,\eta)=4\xi-4\xi\eta-4\xi^2,  
				N_5(\xi,\eta)=4\xi\eta,  
				N_6(\xi,\eta)=4\eta-4\xi\eta-4\eta^2.$
			Note that the aggregate of all basis functions is unity, i.e., $\sum\limits_{i=1}^6N_i(\xi,\eta)=1$. As a consequence, the entire reference space is mapped in such a way that each vertex corresponds to the corresponding vertex in the physical space, as shown in in Fig.\ref{triangle}.
			
		\begin{figure}[ht]
			\centering
			\includegraphics*[width=0.54\linewidth]{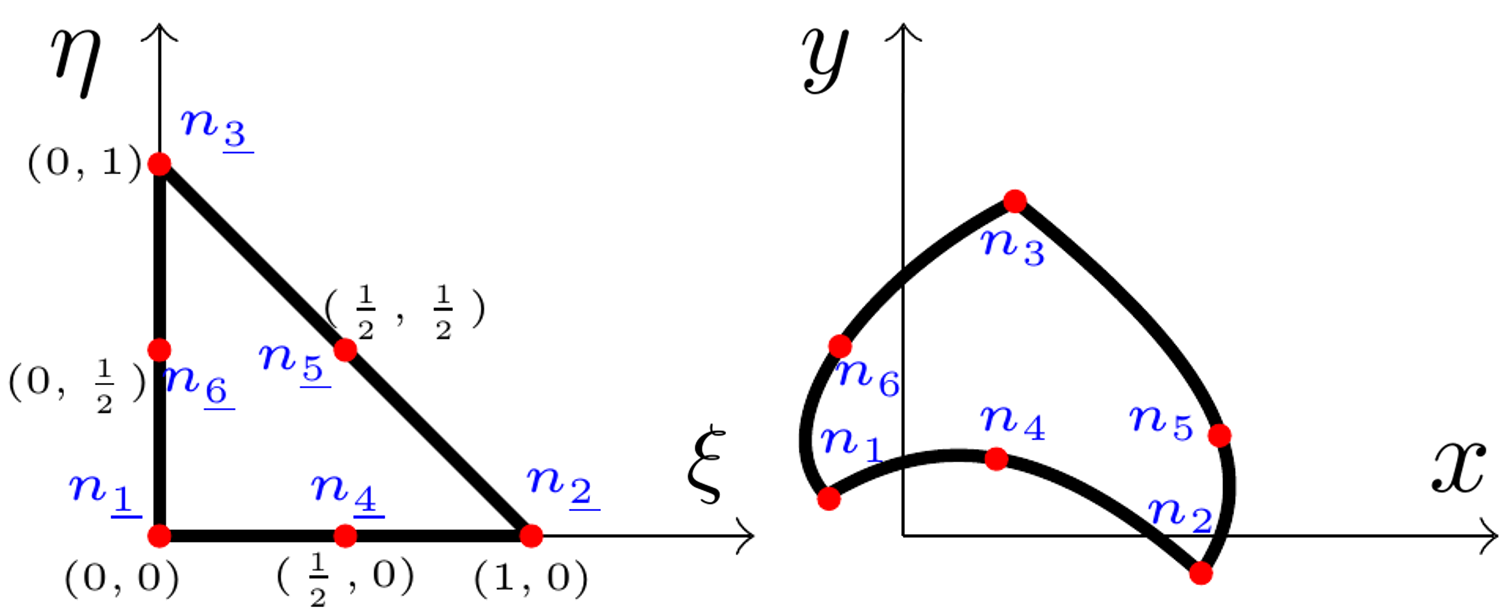}
			\caption{The reference elements (left panel) and material elements (right panel) of a triangular mesh, as well as their corresponding relationships with control points.}
			\label{triangle}
		\end{figure}

\item Obtain the parameter equations for each side of quadratic-curvilinear triangular mesh. The isoparametric map is defined as
			\begin{equation}\label{shapefunction2} \mathbf{x}(\xi,\eta)=\left(x\left(\xi,\eta\right),y\left(\xi,\eta\right)\right)=\left(\sum\limits_{i=1}^6x_iN_i(\xi,\eta),\sum\limits_{i=1}^6y_iN_i(\xi,\eta)\right).
			\end{equation}
			It should be noted that $x\left(\xi,\eta\right),y\left(\xi,\eta\right)$ in \eqref{shapefunction2} can be expressed as bi-quadratic polynomials of $\xi$ and $\eta$. 
			%Take the hypotenuse of the triangle as an example, we substitute $\xi=1-\eta$ into \eqref{shapefunction2}, which ensures that the parameter $\eta$ changes from 0 to 1 along the induced orientation of the curvilinear triangle boundary, so we have the parametric equation
			%$$	\left\{  	\begin{array}{l}
		%	x^H(\eta)=\left(2x_2+2x_3-4x_5\right)\eta^2+\left(-3x_2-x_3+4x_5\right)\eta+x_2,\\
		%		y^H(\eta)=\left(2y_2+2y_3-4y_5\right)\eta^2+\left(-3y_2-y_3+4y_5\right)\eta+y_2.
		%	\end{array}  
		%	\right.$$
			Therefore, we can simplify the parametric equations to 
			\begin{equation}\label{parametric}
				\left[\begin{matrix}
					x^F(\theta) \\
					y^F(\theta) \\
				\end{matrix} \right]=\left[\begin{matrix}
					\alpha^{x,F}_{\theta\theta} \\
					\alpha^{y,F}_{\theta\theta} \\
				\end{matrix} \right]\theta^2+\left[\begin{matrix}
					\alpha^{x,F}_{\theta} \\
					\alpha^{y,F}_{\theta} \\
				\end{matrix} \right]\theta+\left[\begin{matrix}
					\alpha^{x,F} \\
					\alpha^{y,F} \\
				\end{matrix} \right],
			\end{equation}
			where $F=\{L,B,H\}$ denotes any side of the reference triangle, $\alpha$ is a set of coefficients depending on sides and $\theta$ is a parameter, $\theta=\xi, \eta$. Each side of the curvilinear triangle is characterized by a quadratic vector function with respect to $\theta$ and $\theta\in[0,1]$.
		\end{enumerate}

\item \textbf{Least-squares reconstruction of $\psi(\cdot,t^n)$.} 
%		\begin{figure}[ht]
%			\centering
%			\includegraphics*[width=0.35\linewidth]{Figures/characteristics_least_square.png}
%			\caption{The points $v_q^\star$ on upstream element $K_j^\star$ for least squares approximation and their corresponding points $v_q$ on Eulerian element $K_j$ to evaluate the value of test function $\psi(x,y,t^n)$ on $v_q^\star$, $q=1,2,...,7$.}
%			\label{characteristics_least_square}
%		\end{figure}
		Since $\psi$ is constant along characteristics, we have 
		$$
		\psi\left(x\left(v_q^\star\right),y\left(v_q^\star\right),t^n\right)=\Psi\left(x\left(v_q\right),y\left(v_q\right)\right),\quad q=1,2,...,7,
		$$
		where $v_q$ and $v_q^\star$ are illustrated in the right panel of Fig.\ref{unstructured triangle} and $v_7$ denotes the barycenter of element $K_j$. 
	We then reconstruct $\psi^\star\in P^2(K_j^\star)$ by a least-squares fit to $\psi(\cdot,\cdot,t^n)$.

\item \textbf{SLDG remapping algorithm.}
%		\begin{figure}[H]
%			\centering
%			\includegraphics*[width=0.3\linewidth]{Figures/curved_triangle_on_eulerian}
%			\caption{The upstream triangular element (colorful part) on the background mesh.}
%			\label{curved_triangle_on_eulerian}
%		\end{figure}
		Given that the upstream element $K_j^\star$ overlaps with $K_l$ in the background mesh, overlapping subregions, referred to as $K_{j,l}^\star$, can be determined. These subregions are displayed in distinct colors in the right panel of Fig.\ref{unstructured triangle}. Following this, the integral \eqref{sl} can be evaluated for each of these subregions individually based on a clipping algorithm, which will be introduced detailed in section \ref{section:remap}.\\
		To evaluate the integral on subregions, we can introduce two auxiliary function $P(x,y)$ and $Q(x,y)$ such that 
		$$
		-\frac{\partial P}{\partial y}+\frac{\partial Q}{\partial x}=u(x,y,t^n)\psi^\star(x,y),
		$$
		so that the integral on subregions $\int_{K_{j,l}^\star}u(x,y,t^n)\psi^\star(x,y)\mathrm{d}x\mathrm{d}y$ can be converted into line integral via Green's Theorem, i.e.,
		\begin{equation}\label{green_theorem}
			\int_{K_{j,l}^\star}u(x,y,t^n)\psi^\star(x,y)\mathrm{d}x\mathrm{d}y=\oint_{\partial K_{j,l}^\star}P\mathrm{d}x+Q\mathrm{d}y.
		\end{equation}
	%	Note that the choices of auxiliary function $P(x,y)$ and $Q(x,y)$ is not unique. 
%, but the value of integral on subregions is independent of the choices. 

%In the implementation, we collect terms of same order $x^iy^j$ in the integrand of the left-hand side of \eqref{green_theorem} and yields
	%	\begin{equation}\label{green_theorem2}
%			u(x,y,t^n)\psi^\star(x,y)=\sum\limits_{i+j\leqslant2} c^{(i,j)}x^iy^j.
%		\end{equation}

		%The integral of these terms can be derived by using \eqref{green_theorem} with the following $P(x,y)$ and $Q(x,y)$,
%		$$
%		\begin{array}{rll}
%			1:\qquad&P=0,&Q=x,\\ 
%			x:\qquad&P=0,&Q=\frac{x^2}{2},\\
%			y:\qquad&P=-\frac{y^2}{2},&Q=0,\\
%			x^2:\qquad&P=0,&Q=\frac{x^3}{3},\\
%			xy:\qquad&P=0,&Q=\frac{x^2y}{2},\\
%			y^2:\qquad&P=-\frac{y^3}{3},&Q=0.
%		\end{array}
%		$$
	\end{enumerate}

		\section{An Intersection-Based Remapping Algorithm} \label{section:remap}
%	From the description in \textbf{Section 1}, we know that the intersection-based remapping algorithm is the most robust remapping method and can be applied to calculate the integral on upstream elements in SLDG method and transfer data in high-precision indirect arbitrary Lagrangian Eulerian method. The clip algorithm between polygons has been given, however, that between polygon and curved polygon and between curved polygons provide higher precision. And unstructured meshes are appealing for meshing complex geometries, so we use unstructured meshes for the following statements.

This section develops a conservative intersection-based remapping procedure tailored to the SLDG update. Given the upstream element $K_j^\star$
 traced from $K_j$
 at $t^{n+1}$
 to $t^n$
, we compute its overlaps with the background mesh and evaluate the cellwise integrals by decomposing 
$K_j^\star\cap K_l$
 into curved subcells and converting each area integral into an oriented line integral along 
 $\partial(K_j^\star\cap K_l)$
 via Green’s theorem. This construction naturally handles straight-sided and quadratic-curvilinear edges and is therefore well suited to complex geometries; compared with polygon-only clipping, treating intersections that involve curved boundaries yields higher geometric fidelity. The algorithmic core consists of (i) robust subcell construction by clipping $K_j^\star$
against the Eulerian mesh, (ii) line–integral evaluation on each curved subcell boundary, and (iii) a set-theoretic decomposition that reduces general intersections to combinations of convex pieces. The subsequent subsections detail these components and the associated moment formulas. 

	\subsection{Intersection of Polynomial Boundary Elements}
	Unlike straight sided polygons \cite{cheng2008high,kaazempur2002efficient, sutherland1974reentrant}, which we can partition concave polygons into multiple convex polygons, For concave curved polygons, their curvature is negative on all non-zero measurement edges, which makes it impractical to perform a segmentation at every point with negative curvature. Therefore, we need to find a shape outside the shape, for which the curvature on this edge is positive, that is, a convex shape. We first consider the constructed shape and the original shape as a whole, and then cut them out, so that we can turn a concave shape into two convex shapes, where we mark the parts that need to be cut as $"-"$ and the rest as $"+"$.

	\begin{figure}[htbp]
		\centering
		\includegraphics*[width=0.19\linewidth]{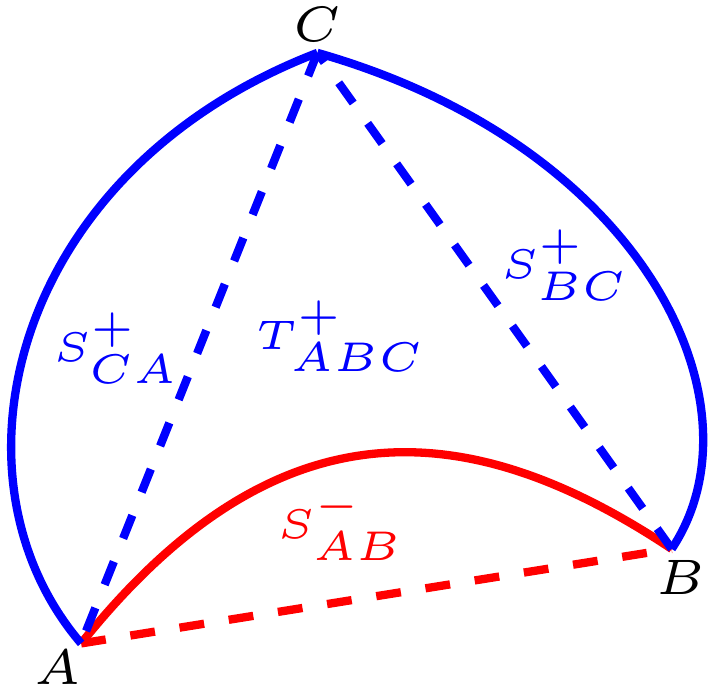}
		\hspace{0.5in}
		\includegraphics*[width=0.24\linewidth]{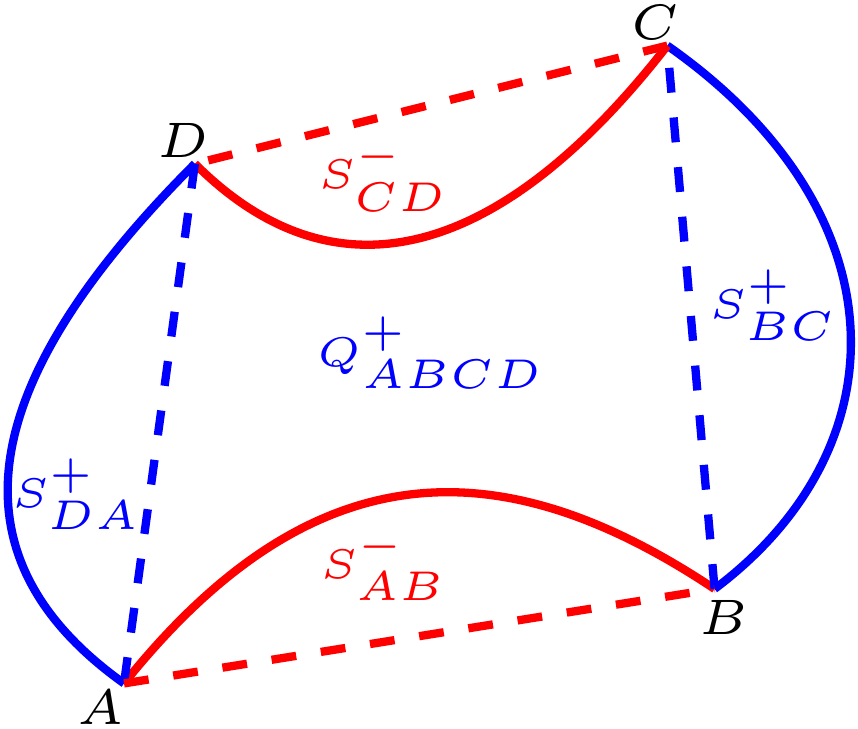}
		\caption{Examples for the partition of quadratic curvilinear triangle (left panel) and quadrilateral (right panel), the blue parts represent them in set $"+"$ and the red parts represent them in set $"-"$ in both panels.}
		\label{partition_2}
	\end{figure}

%	Take a curved quadrilateral in Fig.\ref{partition_2} (right) for an example, the group of $"+"$ comprises three separate parts: two parabolic segments and one quadrilateral. The $"-"$ group is made up by two parabolic segments. Generally speaking, there are consistently five components, with one component, which is a quadrilateral, in the group of $"+"$. In the case shown in the Fig.\ref{partition_2}, we can decompose the curved quadrilateral into
%	$
%	ABCD=\big(Q^+_{ABCD}\cup S^+_{BC}\cup S^+_{DA}\big)\backslash\big(S^-_{AB}\cup S^-_{CD}\big).
%	$
%	Then we can consider the integral on intersection between such curved quadrilaterals or curved triangles, the logic is inspired by the algorithm presented in \cite{warshaw1977area} and summarize in \cite{shashkov2024remapping}.
	Take a curved quadrilateral in Fig.\ref{partition_2} (right) for an example, we can decompose the curved quadrilateral into
	$
	ABCD=\big(Q^+_{ABCD}\cup S^+_{BC}\cup S^+_{DA}\big)\backslash\big(S^-_{AB}\cup S^-_{CD}\big).
	$
	Then we can consider the integral on intersection between such curved quadrilaterals or curved triangles, the logic is inspired by the algorithm presented in \cite{warshaw1977area} and summarize in \cite{shashkov2024remapping}, and it can be formulate as
	\begin{equation}\label{inters}
		\|A\cap B\|=\sum\limits_{k_A,k_B}\|A_{k_A}^+\cap B_{k_B}^+\|+\sum\limits_{l_A,l_B}\|A_{l_A}^-\cap B_{l_B}^-\|- \sum\limits_{k_A,l_B}\|A_{k_A}^+\cap B_{l_B}^-\|-\sum\limits_{l_A,k_B}\|A_{l_A}^-\cap B_{k_B}^+\|,
	\end{equation}
	where the units are represented as parts $\left\{  
	\begin{array}{l}
			A=\left(\cup_{k_A}A_{k_A}^+\right)\setminus\left(\cup_{l_A}A_{l_A}^-\right),\\  
			B=\left(\cup_{k_B}B_{k_B}^+\right)\setminus\left(\cup_{l_B}B_{l_B}^-\right).
		\end{array}  
	\right.$\\
	For applying this algorithm to DG framework on unstructured meshes, the traditional basis defined as in \cite{zhu2013runge} can no longer used, a new set of basis can be derived by a 2D Gram-Schmidt process over curvilinear unstructured meshes.

\subsection{Algorithm for Intersection of Convex Domains}
	We decompose complex, concave regions into simple convex regions, which makes it easier for us to utilize the two good properties of convex regions $\Omega_1$ and $\Omega_2$:
	\begin{enumerate}
		\item The intersection domain $\Omega_{12}=\Omega_1\cup\Omega_2$ is convex.
		\item The vertices of $\Omega_{12}$ include $\left\{\begin{array}{l}
			\text{intersection points of the domain boundaries,}\\
			\text{vertices contained within another domain.}
		\end{array}\right.$
	\end{enumerate}
	
\subsubsection{Integral on Intersecion of Convex Domains}\label{order}
	Since $\Omega_{12}$ is convex, we can order all vertices of $\Omega_{12}$ counter-clockwise by initially picking node $O$ that lies interior to the domain $\Omega_{12}$ (all vertices' convex blend). Subsequently, the angular coordinates of the vertex set are determined within reference frame defined by arbitrary axis $r$ and origin $O$. The vertices are then ordered based on their calculated polar angles. 
	Next, we only need to find out all the vertices of intersection domain $\Omega_{12}$.

	\subsubsection{The Vertexes of Intersection Domain}\label{vertexes}
	Two geometric operations are necessary to find all vertices of $\Omega_{12}$:
	\begin{enumerate}
		\item \textbf{Determine the intersections between the segments of one boundary and those of another boundary.}\\
		The upstream element is represented by a quadratic-curvilinear triangular approximation and the Eulerian mesh is composed of triangular elements, thus the intersection points of two straight lines, intersections of an arc and a straight line as well as intersections between arcs are within the scope of our consideration.\\
		For two straight lines, coordinate of the intersection point $(P_x,P_y)$ can be expressed easily. As for an arc with parametric equation \eqref{parametric} and a straight segment, we can find the intersection point by solving the system:
		$$\left\{ 
		\begin{array}{l}
			a^x\xi^2+b^x\xi+c^x=x_1+\eta(x_2-x_1),\\  
			a^y\xi^2+b^y\xi+c^y=y_1+\eta(y_2-y_1).
		\end{array}  
		\right.$$
%		After eliminating parameter $\eta$, we get a quadratic equation of parameter $\xi$, choosing the quadratic equation's roots which satisfy $\xi\in\left[0,1\right]$, and substitute it into the system of equations to find the solution of $\eta$, if $\eta\in\left[0,1\right]$, the corresponding point is considered as an intersection point.\\
        And the algorithm for finding intersections between two arcs is shown in Appendix \ref{appendixD}.
%		As for two arcs with parametric equations:
%		\begin{equation}\label{parametric_arcs}
%			\left\{ 
%			\begin{array}{l}
%				x_1(\xi)=a_1^x\xi^2+b_1^x\xi+c_1^x,\\  
%				y_1(\xi)=a_1^y\xi^2+b_1^y\xi+c_1^y,
%			\end{array}  
%			\right.\text{ and }\left\{  
%			\begin{array}{l}
%				x_2(\eta)=a_2^x\eta^2+b_2^x\eta+c_2^x,\\  
%				y_2(\eta)=a_2^y\eta^2+b_2^y\eta+c_2^y,
%			\end{array}  
%			\right.
%		\end{equation}
%		where $\xi,\eta\in\left[-1,1\right]$. Intersection points satisfy $\left\{  
%		\begin{array}{l}
%			x_1(\xi)=x_2(\eta)\\  
%			y_1(\xi)=y_2(\eta)
%		\end{array}  
%		\right.$, this system consists of two quadratic equations, by applying this, we can simplify the system to a single quartic equation:
%		$$\begin{array}{rl}
%			a_1^x\xi^2+b_1^x\xi&=a_2^x\eta^2+b_2^x\eta+c_2^x-c_1^x:=\alpha,\\
%			a_1^y\xi^2+b_1^y\xi&=a_2^y\eta^2+b_2^y\eta+c_2^y-c_1^y:=\beta.
%		\end{array}$$
%		We treat this system as  $\left[\begin{matrix}
%			\xi^2 \\
%			\xi \\
%		\end{matrix} \right]=\left[\begin{matrix}
%			a_1^x & b_1^x\\
%			a_1^y & b_1^y\\
%		\end{matrix} \right]^{-1}\left[\begin{matrix}
%			\alpha \\
%			\beta \\
%		\end{matrix} \right].$
%		By organizing coefficients, we have:
%		$$\begin{array}{rl}
%			\xi^2&=A\eta^2+B\eta+C,\\
%			\xi&=a\eta^2+b\eta+c.
%		\end{array}$$
%		This system yields a quartic equation in $\eta$ that
%		$$A\eta^2+B\eta+C=(a\eta^2+b\eta+c)^2.$$
%		So we find the intersection points of arcs by solving the equation.
		\item \textbf{Determine if a vertex of one domain is inside another domain.}\\
		The upstream element is approximated by a quadratic-curvilinear triangle, which can be divided into a triangle and three parabolic segments, showing in the Fig.\ref{partition_2} (left) thus we need to determine whether a vertex is inside of a triangle or a parabolic segment.\\
		As for the case of triangle, we can ascertain it along the sides of the triangle with induced orientation side-by-side by calculating the outer product.\\
%		As for the case of triangle, we can ascertain it along the sides of the triangle with induced orientation side-by-side by calculating the outer product $\boldsymbol{a}\times\boldsymbol{b}$, where $\boldsymbol{a}=(x_2-x_1,y_2-y_1),\boldsymbol{b}=(x-x_1,y-y_1)$, and $(x_1,y_1), (x_2,y_2)$ are the starting and ending points along the induced orientation respectively, $(x,y)$ is the vertex to be determined. By the knowledge of analytic geometry, $$\boldsymbol{a}\times\boldsymbol{b}=\left | \begin{matrix}
%			\boldsymbol{i}&\boldsymbol{j}&\boldsymbol{k}\\
%			x_2-x_1&y_2-y_1&0  \\
%			x-x_1&y-y_1&0 \\
%		\end{matrix} \right | = \left | \begin{matrix}
%			x_2-x_1&y_2-y_1  \\
%			x-x_1&y-y_1 \\
%		\end{matrix} \right | \boldsymbol{k},$$
%		If $\boldsymbol{a}\times\boldsymbol{b}$ and $\boldsymbol{k}$ are in the same direction (i.e. $(y-y_1)(x_2-x_1)-(y_2-y_1)(x-x_1)>0$), vertex $(x,y)$ is on the left side of the segment. And if it established for all edges, the vertex is inside the triangle.\\
		As for the case of parabolic segment, the algorithm is shown in Appendix \ref{appendixE}.%a positive area is obtained from the contour integral under the assumption that the boundary of it has a counter-clockwise orientation.\\
	\end{enumerate}
	\subsection{Procedure of The Intersection-Based Remapping Algorithm}
To remap the solution from the Eulerian mesh to the upstream element, we proceed as follows.
	\begin{enumerate}
		\item \textbf{Search step.} To identify Eulerian elements (black lines in Fig.~\ref{auxiliary_mesh}, left) that intersect the Lagrangian element $K_j^\star$, we build an auxiliary rectangular mesh with a lookup index~\cite{cai2022eulerian} (right). The index returns candidate elements that may intersect $K_j^\star$; computing intersections between edges of $K_j^\star$ and these candidates yields the final set of intersecting Eulerian elements.
		\begin{figure}[ht]
			\centering
			\includegraphics*[width=0.59\linewidth]{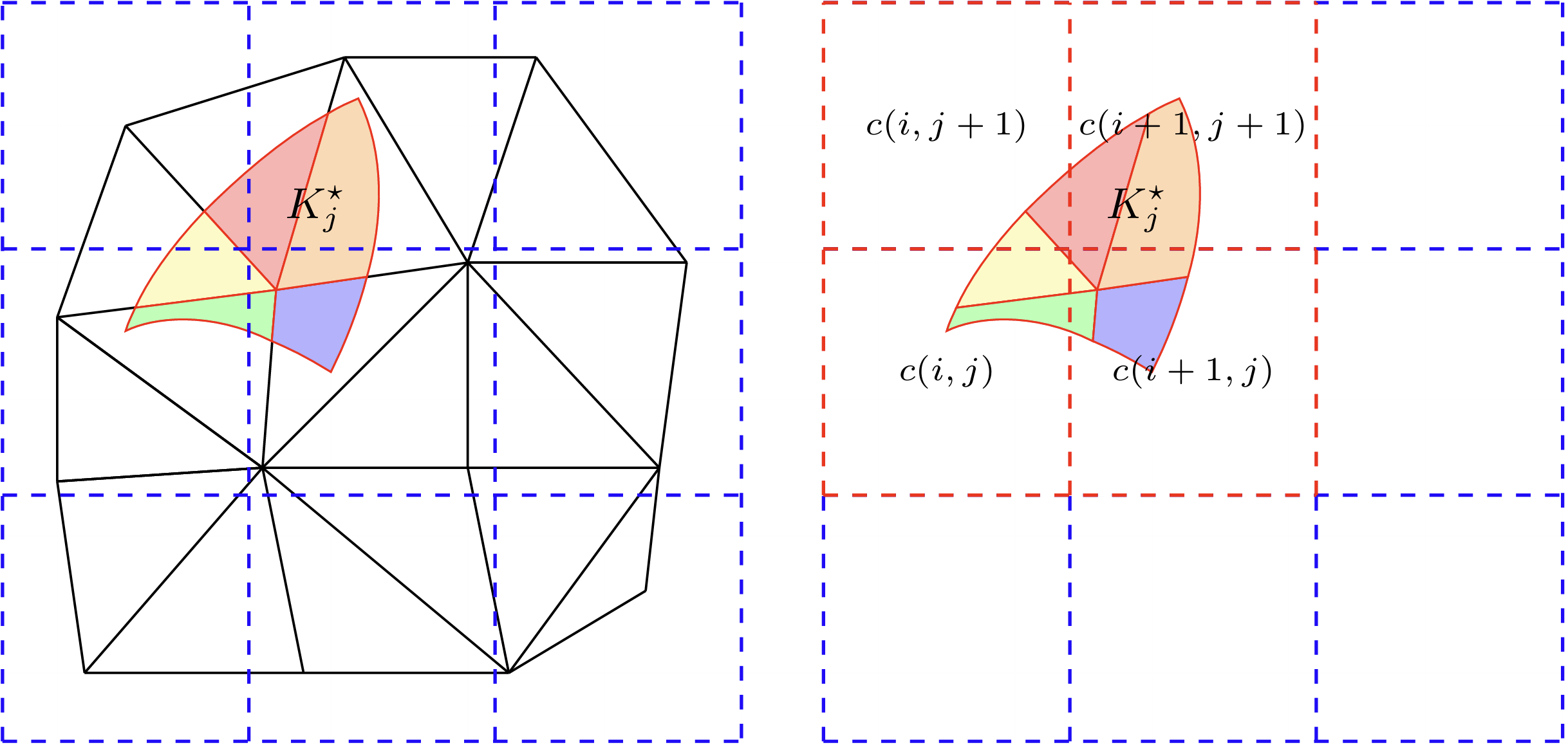}
\includegraphics*[width=0.30\linewidth]{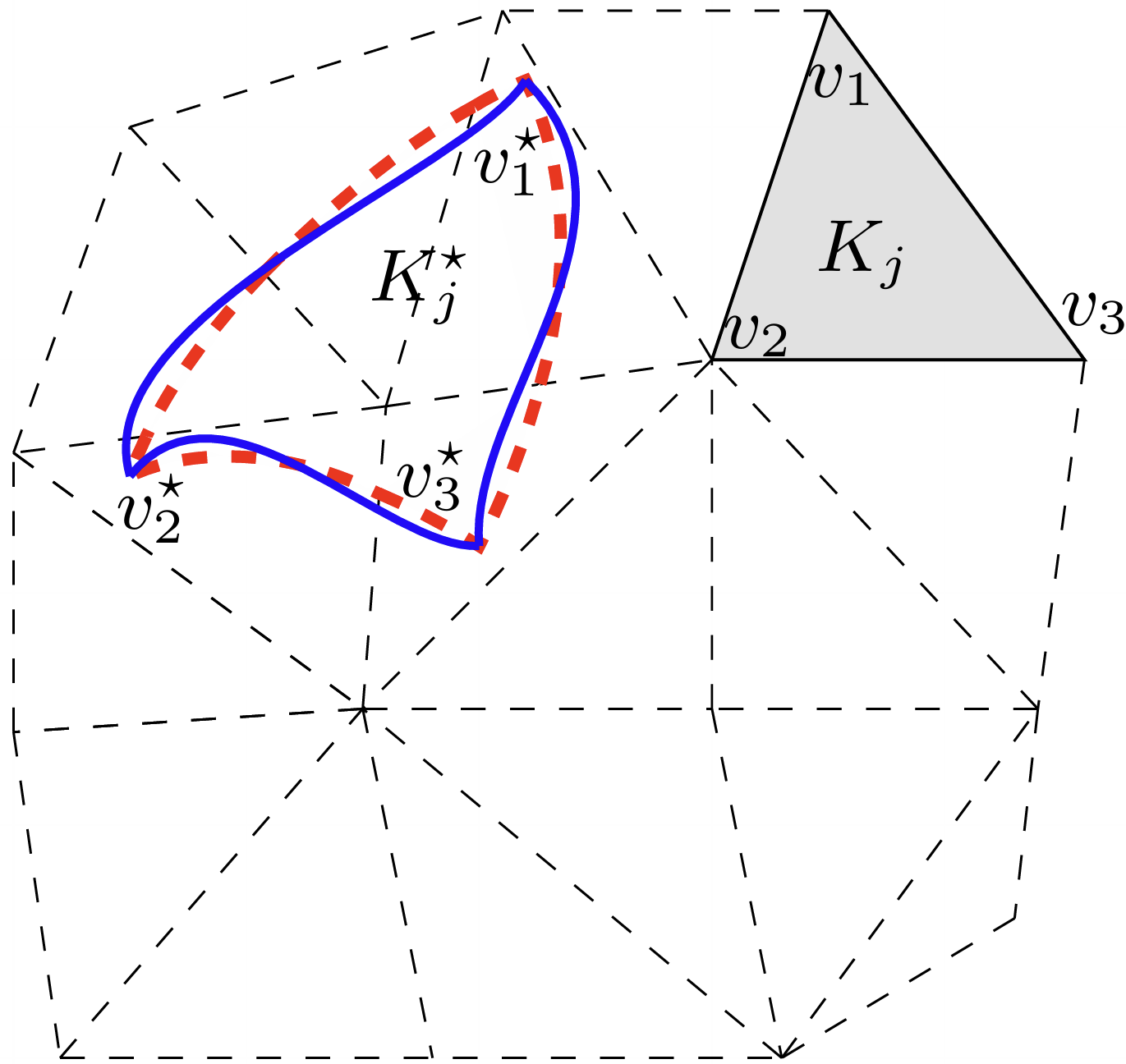}
			\caption{Left: auxiliary mesh (blue lines) connectS with Eulerian mesh (black lines) and the upstream element $K_j^\star$. Middle: the look-up index that locate $K_j^\star$ on auxiliary mesh with red lines. Right: an element $K_j$ on Eulerian mesh and its upstream element $K_j^\star$ which exactly bounded by blue curves and approximated by red dashed curves.}
			\label{auxiliary_mesh}
		\end{figure}

		\item \textbf{Partition step.} Segment concave regions into convex regions. For quadratic-curvilinear triangle or quadrilateral, we can segment them as shown in Fig.\ref{partition_2}. For cubic or higher-order curvilinear elements, more segmentation are needed.
		\item \textbf{Clipping step.} Perform a clipping algorithm for $K_j^\star$ and elements on Eulerian mesh to get the intersection regions shown in Fig.\ref{auxiliary_mesh} with colorful parts following the algorithm summarized in Appendix \ref{appendixF}.

		\item \textbf{Integration step.} By introducing auxiliary functions $P(x,y)$ and $Q(x,y)$, the area integral can be converted into line integrals via Green's theorem, same as the procedure in Section \ref{procedure_sldg}.
	\end{enumerate}
	\remark{For cubic or higher-order curvilinear elements, we need to use the inflection point of the curve which can be found be analyse the second derivative of the parameter equation:
		\begin{equation}\label{derivative}
			\frac{\mathrm{d}^2 y }{\mathrm{d} x^2}=\frac{\frac{\mathrm{d}}{\mathrm{d}\xi}\left(\frac{\mathrm{d}y}{\mathrm{d}x}\right)}{\frac{\mathrm{d}x}{\mathrm{d}\xi}}=\frac{\frac{\mathrm{d}}{\mathrm{d}\xi}\left(\frac{\mathrm{d}y}{\mathrm{d}\xi}/\frac{\mathrm{d}x}{\mathrm{d}\xi}\right)}{\frac{\mathrm{d}x}{\mathrm{d}\xi}}=\frac{\frac{\mathrm{d}^2y}{\mathrm{d}\xi^2}\frac{\mathrm{d}x}{\mathrm{d}\xi}-\frac{\mathrm{d}^2x}{\mathrm{d}\xi^2}\frac{\mathrm{d}y}{\mathrm{d}\xi}}{\left(\frac{\mathrm{d}x}{\mathrm{d}\xi}\right)^3}.
		\end{equation}
		Take the cubic-curvilinear trianguler element as an example, the parametric equations are $\left\{ 
		\begin{array}{l}
			x(\xi)=a^x\xi^3+b^x\xi^2+c^x\xi+d^x,\\ 
			y(\xi)=a^y\xi^3+b^y\xi^2+c^y\xi+d^x.
		\end{array}  
		\right.$ By applying the parametric equations to equation \eqref{derivative}, we have the second derivative
		$$
		\frac{\mathrm{d}^2 y }{\mathrm{d} x^2}=\frac{6\left(a^yb^x-a^xb^y\right)\xi^2+6\left(a^yc^x-a^xc^y\right)\xi+2\left(b^yc^x-b^xc^y\right)}{\left(3a^x\xi^2+2b^x\xi+c^x\right)^3}.
		$$
		By applying the Fundamental Theorem of Algebra, we know that there are no more than two inflection point on each side of cubic triangle, and the inflection points partition the cubic triangle into convex domains as shown in Fig.\ref{cubic_triangle}. From the analysis in Section~\ref{section:analytical}, we know that the dividing line and boundary of element will not intersect with appropriately small element size.
		\begin{figure}[ht]
			\centering
			\includegraphics*[width=0.18\linewidth]{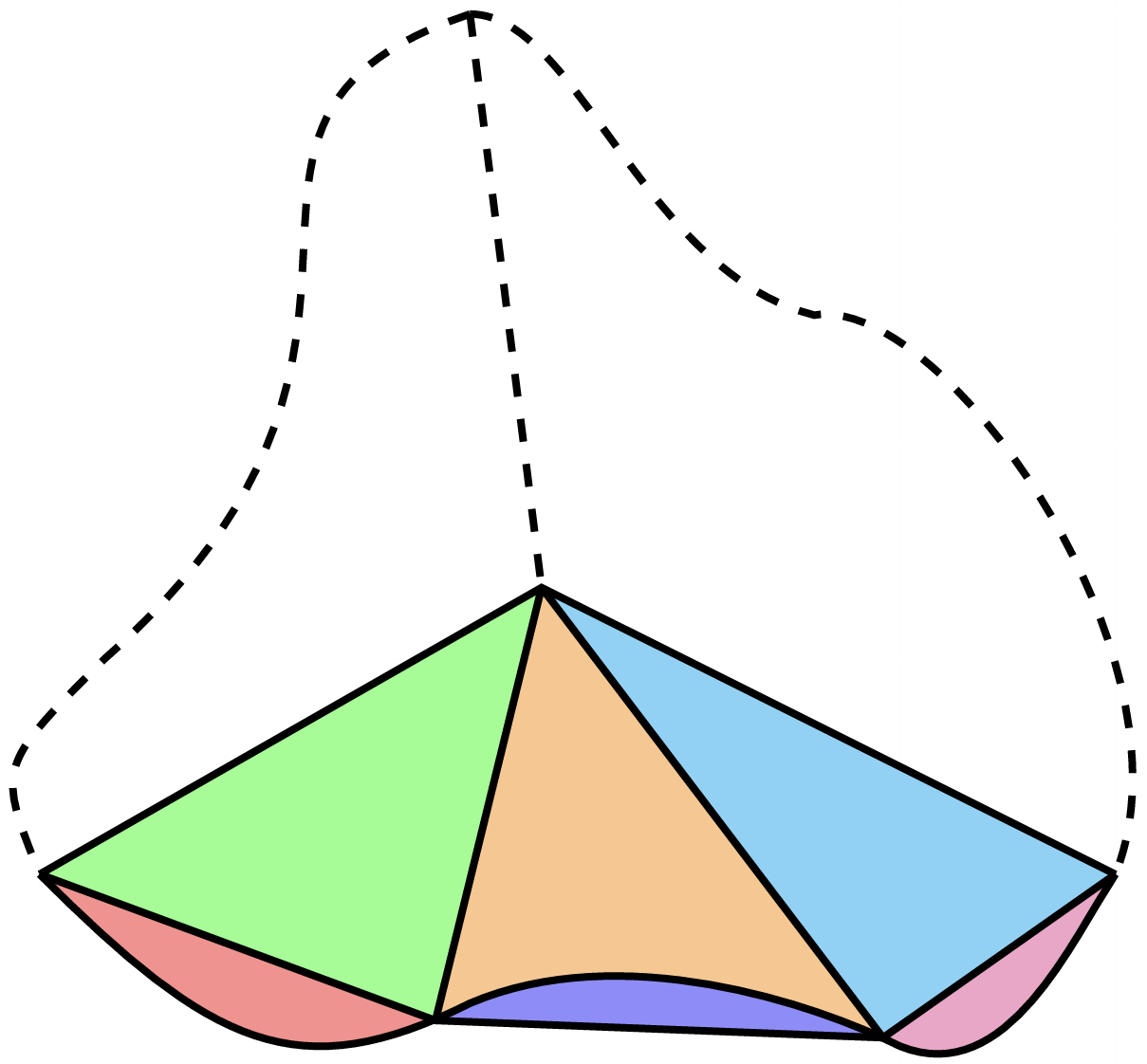}
			\caption{Take third of a cubic triangle (colorful region) as example. The region be divided into six convex domains.}
			\label{cubic_triangle}
	\end{figure}}\label{remark3}

	\section{Consistent Error Analysis} \label{section:analytical}
This section study the consistency of the proposed scheme on unstructured meshes and other properties, like mass conservation. 
%To verify the convergence of the scheme, a rigorous consistency proof is provided in this section, and stability is numerically verified with large final time and arbitrarily large time-stepping sizes simulations in section \ref{section:numerical}.
\begin{proposition}\label{mass_conserve}
	(Mass Conservation). The proposed scheme is mass conservative if periodic boundary condition is imposed, which can be expressed as
	\begin{equation}\label{mass_conservation}
		\int_{\Omega}u^{n+1}(x,y)\mathrm{d}x\mathrm{d}y=\int_{\Omega}u^{n}(x,y)\mathrm{d}x\mathrm{d}y.
	\end{equation}
\end{proposition}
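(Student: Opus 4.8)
The plan is to exploit the weak formulation \eqref{sl} with the specific choice of test function $\Psi \equiv 1$. Since $1 \in P^k(K_j)$ for every $k \ge 0$, this is an admissible test function on each Eulerian element. First I would examine the adjoint problem \eqref{adjoint}: with terminal data $\psi(x,y,t^{n+1}) = \Psi(x,y) \equiv 1$ and the fact that $\psi$ is transported unchanged along the characteristics, it follows that $\psi(x,y,t^n) \equiv 1$ throughout. Substituting into \eqref{sl} collapses the local update to
$$
\int_{K_j} u^{n+1}\,\mathrm{d}x\,\mathrm{d}y = \int_{K_j^\star} u^{n}\,\mathrm{d}x\,\mathrm{d}y,
$$
that is, the mass carried by each Eulerian cell equals the mass on its upstream counterpart.

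Next I would sum this identity over all $j = 1,\dots,J$. Because $\{K_j\}$ triangulates $\Omega$, the left-hand sides add up to $\int_\Omega u^{n+1}\,\mathrm{d}x\,\mathrm{d}y$. The conclusion \eqref{mass_conservation} then reduces to showing that the upstream elements $\{K_j^\star\}$ likewise form a partition of $\Omega$, so that the right-hand sides sum to $\int_\Omega u^n\,\mathrm{d}x\,\mathrm{d}y$. Under the periodic boundary condition the characteristic flow map from $t^{n+1}$ back to $t^n$ is a bijection of $\Omega$ onto itself; hence the exact upstream cells inherit the tiling property from the Eulerian cells, covering $\Omega$ with no gaps and no overlaps.

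The main obstacle is that the scheme does not use the exact upstream cells but their quadratic (TRIA6) reconstructions, so I must verify that the \emph{approximate} upstream elements still tile $\Omega$ exactly. The key observation is that two neighboring Eulerian triangles share an edge, and therefore share its two endpoints and its midpoint node, all of which are traced backward to the same three upstream points; since a TRIA6 side is precisely the unique quadratic curve determined by these three nodes (only the three edge-incident shape functions are nonzero along that side), the reconstructed boundary between the two upstream cells is identical as a point set when viewed from either cell. Consequently the approximate $\{K_j^\star\}$ meet along common curved edges without overlap or void and, by periodicity, retile $\Omega$. Finally I would note that the remapping itself is exact: the decomposition $K_j^\star = \bigcup_l (K_j^\star \cap K_l)$ is a genuine partition of $K_j^\star$, and the Green's-theorem evaluation \eqref{green_theorem} computes each subcell integral exactly, so summing over $l$ recovers $\int_{K_j^\star} u^n\,\mathrm{d}x\,\mathrm{d}y$ without conservation error. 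Combining these facts yields \eqref{mass_conservation}.
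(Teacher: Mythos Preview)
Your proposal is correct and is precisely the standard SLDG mass-conservation argument that the paper invokes (and omits, citing \cite{cai2022eulerian}): choose $\Psi\equiv 1$, use constancy of $\psi$ along characteristics, sum over cells, and appeal to the fact that the reconstructed upstream elements retile $\Omega$ under periodicity because adjacent TRIA6 edges are determined by the same three traced nodes. Your additional care in noting that the Green's-theorem remapping is exact on each subcell is appropriate and completes the argument.
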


\begin{proof}
	The proof follows the standard SLDG argument (see \cite{cai2022eulerian}) and is omitted. 
\end{proof}

We consider an edge of $K_j$ on Fig.\ref{auxiliary_mesh} (right), $\overline{v_1v_2}$, at $t=t^{n+1}$ and its characteristic upstream curve $\widehat{v^\star_1v^\star_2}$. Here, the blue curves is obtained by exactly solving equation \eqref{characteristic} from every points on $\overline{v_1v_2}$ as final value. This is impractical to implement, so we approximate them by the red quadratic curves.\\
Suppose the coordinate of $v_i$ is $(x_i,y_i)$, $i=1,2,3$, then we have the parametric equation of $\overline{v_1v_2}$: $\left\{ 
\begin{array}{l}
x(\theta)=x_1+\theta\Delta x:=x_1+\theta(x_2-x_1),\\  
y(\theta)=y_1+\theta\Delta y:=y_1+\theta(y_2-y_1),
\end{array}  
\right.$ where $\theta\in [0,1]$. We define points on $\widehat{v^\star_1v^\star_2}$ as $\mathcal{X}(\theta):=X\Big(x(\theta),y(\theta);t^n\Big)$ and $\mathcal{Y}(\theta):=Y\Big(x(\theta),y(\theta);t^n\Big)$, where $X$ and $Y$ represent the solution of equation \eqref{characteristic} from $t^{n+1}$ to $t^n$ and $\Big(x(\theta),y(\theta)\Big)$ as final value. From that process, the parametric equation of $\widehat{v^\star_1v^\star_2}$ is represented by
\begin{equation}\label{parametric_equation}
\left\{ 
\begin{array}{l}
\mathcal{X}(\theta)=x(\theta)+\int_{t^{n+1}}^{t^n}a\Bigg(X\Big(x(\theta),y(\theta);t\Big),Y\Big(x(\theta),y(\theta);t\Big),t\Bigg)dt,\\  
\mathcal{Y}(\theta)=y(\theta)+\int_{t^{n+1}}^{t^n}b\Bigg(X\Big(x(\theta),y(\theta);t\Big),Y\Big(x(\theta),y(\theta);t\Big),t\Bigg)dt,
\end{array}  
\right.
\end{equation}
where $\theta\in[0,1]$.\\ 
For analyzing the consistency of the SLDG method on unstructured meshes, we need some lemmas, and we divide our proof in three steps. First, we need to verify some property of the upstream element, including Lemma \ref{lemma_derivative}, \ref{lemma_derivative2} and \ref{lemma_distance}, as a prerequisite for following proof. The next thing to do in the proof is to estimate the error generated by piecewise polynomials approximation in Lemmas \ref{projection_theorem} and \ref{projection_lemma}. Another step is to estimate the approximation error of the upstream element in Lemma \ref{edge_approximate}. Finally, we can combine the Lemmas and prove the consistency of the proposed SLDG scheme in Proposition \ref{consistency}.
\begin{lemma}\label{lemma_derivative}
If $a(x,y,t),b(x,y,t)\in C^{\infty}$, the derivatives of $\mathcal{X}(\theta)$ and $\mathcal{Y}(\theta)$ satisfies
\begin{equation}\label{derivative1}
\left[\begin{matrix}
	\frac{\mathrm{d} \mathcal{X}(\theta)}{\mathrm{d} \theta} \\
	\frac{\mathrm{d} \mathcal{Y}(\theta)}{\mathrm{d} \theta} \\
\end{matrix} \right]=\left[\begin{matrix}
	\Delta x+o(\Delta t)\\
	\Delta y+o(\Delta t)\\
\end{matrix} \right],\quad \theta\in[0,1].
\end{equation}
\end{lemma}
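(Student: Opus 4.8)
The plan is to exploit the fact that the upstream curve $\widehat{v_1^\star v_2^\star}$ is simply the backward flow of the straight edge $\overline{v_1v_2}$. Integrating the characteristic system \eqref{characteristic} from $t^{n+1}$ to $t^n$ shows that the representation \eqref{parametric_equation} can be rewritten as $\mathcal{X}(\theta)=X\big(x(\theta),y(\theta);t^n\big)$ and $\mathcal{Y}(\theta)=Y\big(x(\theta),y(\theta);t^n\big)$, where $(X,Y)(\cdot,\cdot;t)$ denotes the flow map carrying terminal data prescribed at $t^{n+1}$ to the solution at time $t$. This identification is the conceptual key: it converts the Leibniz-rule differentiation of the integral in \eqref{parametric_equation} into a clean chain-rule computation on a smooth flow map, avoiding any delicate differentiation under the integral sign.

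First I would differentiate with respect to $\theta$. Since $x(\theta)=x_1+\theta\Delta x$ and $y(\theta)=y_1+\theta\Delta y$, the chain rule gives
\begin{equation*}
\begin{bmatrix} \mathcal{X}'(\theta) \\ \mathcal{Y}'(\theta) \end{bmatrix}
= J(t^n)\begin{bmatrix} \Delta x \\ \Delta y \end{bmatrix},
\end{equation*}
where $J(t)$ is the Jacobian matrix $\partial(X,Y)/\partial(x,y)$ of the flow map with respect to its terminal position, evaluated at $\big(x(\theta),y(\theta);t\big)$. The whole estimate therefore reduces to controlling how far $J(t^n)$ deviates from the identity.

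Next I would bound $J(t^n)$ by standard ODE sensitivity theory, which is available because $a,b\in C^{\infty}$. The Jacobian $J$ solves the variational (linearized) equation $\tfrac{d}{dt}J=A(t)J$ with terminal condition $J(t^{n+1})=I$, where $A$ is the spatial Jacobian of the velocity field $(a,b)$ evaluated along the characteristic. Integrating from $t^{n+1}$ to $t^n$ and applying Gronwall's inequality yields $J(t^n)=I+O(\Delta t)$, with the constant depending only on bounds for the first derivatives of $a$ and $b$. Substituting back gives $\mathcal{X}'(\theta)=\Delta x+O(\Delta t)\big(|\Delta x|+|\Delta y|\big)$ and, symmetrically, $\mathcal{Y}'(\theta)=\Delta y+O(\Delta t)\big(|\Delta x|+|\Delta y|\big)$. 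Since $|\Delta x|,|\Delta y|\le\mathrm{diam}(K_j)\le h$, the correction is $O(h\,\Delta t)$, which is the asserted remainder.

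The one point requiring genuine care is the asymptotic bookkeeping rather than any analytic difficulty: the correction term is intrinsically of size $O\big(\Delta t\cdot\max(|\Delta x|,|\Delta y|)\big)$, and it is only the smallness of the edge vector, $|\Delta x|,|\Delta y|=O(h)$, that upgrades it from $O(\Delta t)$ to $o(\Delta t)$. I would therefore state explicitly that the little-$o$ is understood in the mesh-refinement regime $h\to 0$, under which $O(h\,\Delta t)/\Delta t=O(h)\to 0$. Apart from this, the only inputs are the smooth dependence of the flow on its terminal data and the Gronwall bound on $J$, both routine consequences of the $C^{\infty}$ hypothesis, so no real obstacle remains beyond careful tracking of the orders in $h$ and $\Delta t$.
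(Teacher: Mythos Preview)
Your argument is correct. The paper does not display its proof in the main text (it is deferred to the supplement), but the natural route---and almost certainly the paper's---is to differentiate the integral representation \eqref{parametric_equation} directly, obtaining
\[
\mathcal{X}'(\theta)=\Delta x+\int_{t^{n+1}}^{t^n}\bigl(a_x\,X_\theta+a_y\,Y_\theta\bigr)\,dt,
\]
and then to bound the integrand by observing that $(X_\theta,Y_\theta)$ solves the linearized characteristic system with terminal data $(\Delta x,\Delta y)$, hence remains $O(h)$ over an interval of length $\Delta t$. Your flow-map formulation packages exactly this computation: writing $(\mathcal{X}',\mathcal{Y}')^{T}=J(t^n)(\Delta x,\Delta y)^{T}$ and invoking the variational equation $\tfrac{d}{dt}J=A(t)J$, $J(t^{n+1})=I$, is the same argument in matrix notation. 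So the two approaches coincide in substance; yours is simply organized around the Jacobian rather than componentwise.

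Your closing remark on the asymptotic bookkeeping is well placed. The lemma as stated does not include the hypothesis $h\sim\Delta t$ (that appears first in Lemma~\ref{lemma_derivative2}), so the honest conclusion from your computation is $\mathcal{X}'(\theta)=\Delta x+O(h\,\Delta t)$, and one needs $h\to 0$ to read this as $o(\Delta t)$. Making that explicit, as you do, is the right call.
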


\begin{lemma}\label{lemma_derivative2}
If $a(x,y,t),b(x,y,t)\in C^{\infty}$ and $\Delta t\sim\Delta x\sim\Delta y$, the high-order derivatives of $\mathcal{X}(\theta)$ and $\mathcal{Y}(\theta)$ satisfies
$$\left[\begin{matrix}
\frac{\mathrm{d}^k \mathcal{X}(\theta)}{\mathrm{d} \theta^k} \\
\frac{\mathrm{d}^k \mathcal{Y}(\theta)}{\mathrm{d} \theta^k} \\
\end{matrix} \right]=\left[\begin{matrix}
O(\Delta t^k)\\
O(\Delta t^k)\\
\end{matrix} \right],\quad \theta\in[0,1],\ \ k=2,3...$$
\end{lemma}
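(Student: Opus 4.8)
The plan is to recognize that \eqref{parametric_equation} is just the time-$t^n$ characteristic flow map evaluated along the affine edge parametrization: integrating \eqref{characteristic} backward from $t^{n+1}$ to $t^n$ gives $\mathcal{X}(\theta)=X\big(x(\theta),y(\theta);t^n\big)$ and $\mathcal{Y}(\theta)=Y\big(x(\theta),y(\theta);t^n\big)$, where $(X,Y)$ is the flow. Since $x(\theta)=x_1+\theta\Delta x$ and $y(\theta)=y_1+\theta\Delta y$ are affine in $\theta$, every $\theta$-derivative of order $\ge 2$ of $x(\theta),y(\theta)$ vanishes, so the multivariate chain rule (Fa\`{a} di Bruno) collapses to
\begin{equation*}
\frac{\mathrm{d}^{k}\mathcal{X}}{\mathrm{d}\theta^{k}}
=\sum_{i+j=k}\binom{k}{i}\,
\frac{\partial^{k}X}{\partial x_0^{\,i}\,\partial y_0^{\,j}}\Big(x(\theta),y(\theta);t^n\Big)\,
(\Delta x)^{i}(\Delta y)^{j},\qquad k\ge 2,
\end{equation*}
and analogously for $\mathcal{Y}$. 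The problem thus reduces entirely to estimating the higher-order sensitivities of the flow map $\Phi=(X,Y)$ over the short window $[t^n,t^{n+1}]$.

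The key estimate I would prove, by induction on the differentiation order $m$, is that $\partial^{1}\Phi=I+O(\Delta t)$ while $\partial^{m}\Phi=O(\Delta t)$ for every $m\ge 2$. Differentiating \eqref{characteristic} in the initial data, the first variation $M=\partial_{(x_0,y_0)}\Phi$ solves $\dot M=J\,M$ with $M(t^{n+1})=I$, where $J$ is the Jacobian of $\boldsymbol{V}$ along the trajectory; a Gronwall estimate over an interval of length $\Delta t$ gives $M=I+O(\Delta t)$. For $m\ge 2$ the $m$-th variation satisfies a linear ODE of the same homogeneous type, $\dot{(\cdot)}=J(\cdot)+\mathcal{F}_m$, with zero data at $t^{n+1}$, in which the forcing $\mathcal{F}_m$ is a universal polynomial in the velocity-field derivatives (bounded because $a,b\in C^\infty$) and in the lower-order variations $\partial^{\ell}\Phi$, $\ell<m$. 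By the induction hypothesis $\mathcal{F}_m=O(1)$, so integrating over $[t^n,t^{n+1}]$ yields $\partial^{m}\Phi=O(\Delta t)$.

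Combining the two steps and invoking $\Delta t\sim\Delta x\sim\Delta y$, each summand in the chain-rule formula is $O(\Delta t)\cdot(\Delta x)^{i}(\Delta y)^{j}=O(\Delta t)\cdot O(\Delta t^{k})=O(\Delta t^{k+1})$, so $\mathrm{d}^{k}\mathcal{X}/\mathrm{d}\theta^{k}=O(\Delta t^{k+1})\subseteq O(\Delta t^{k})$, and the same for $\mathcal{Y}$; in fact this route gives one power of $\Delta t$ more than the statement requires. The main obstacle is the inductive sensitivity bound of the second step: one must verify that the forcing $\mathcal{F}_m$ stays $O(1)$ at every order and that the Gronwall constants remain uniform in $m$, which is where the $C^\infty$ regularity of $a,b$ and careful bookkeeping of the combinatorial structure of the variational hierarchy are essential. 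The reduction in the first step is routine once one observes that the affine edge parametrization annihilates all higher $\theta$-derivatives of $x(\theta),y(\theta)$.
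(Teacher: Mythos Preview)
Your plan is correct and is essentially the approach the paper takes: differentiate the backward characteristic map repeatedly in $\theta$, exploit that $x(\theta),y(\theta)$ are affine so all higher $\theta$-derivatives of the edge parametrization vanish, and control the resulting derivatives by an induction built on the variational (linearized) ODEs with a Gronwall bound over the short window $[t^n,t^{n+1}]$. The only cosmetic difference is packaging: the paper works directly from the integral representation \eqref{parametric_equation} and differentiates it in $\theta$ (so the induction variable is the $\theta$-derivative $X_\theta, X_{\theta\theta},\dots$ of the trajectory at intermediate times $t\in[t^n,t^{n+1}]$), whereas you separate the roles by first passing to the initial-data sensitivities $\partial^m\Phi$ of the flow and then contracting with $(\Delta x,\Delta y)^{\otimes k}$ via Fa\`a di Bruno. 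Both routes lead to the same linear hierarchy with zero data and $O(1)$ forcing, hence $O(\Delta t)$ for each higher sensitivity, and your observation that this actually gives $O(\Delta t^{k+1})$ rather than $O(\Delta t^{k})$ is correct. One minor remark: uniformity of the Gronwall constants in $m$ is not needed, since the lemma is asserted for each fixed $k$; the $C^\infty$ assumption is used only to guarantee that, for the given $k$, all velocity derivatives up to order $k$ appearing in $\mathcal{F}_m$ are bounded.
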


The Lemma \ref{lemma_derivative} and \ref{lemma_derivative2} guarantee that the exact upstream elements are only mildly distorted, so the sides of upstream elements are Jordan curves.

\begin{lemma}\label{lemma_distance}
We assume the coordinate of $v^\star_i$ is $(x^\star_i,y^\star_i),\ \ i=1,2,3$. If $a(x,y,t)$, $b(x,y,t)\in C^{\infty}$ and $h\sim\Delta t\sim\Delta x\sim\Delta y$, we have
\begin{equation}\label{upstream}
x^\star_2-x^\star_1=O(h),\ \ y^\star_2-y^\star_1=O(h),\ \ d(v^\star_1,v^\star_2)=O(h),
\end{equation}
where $d(\cdot,\cdot)$ denotes Euclidean distance.
\end{lemma}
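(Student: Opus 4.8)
The plan is to obtain the two coordinate differences directly by integrating the edge-derivative estimate supplied by Lemma~\ref{lemma_derivative}, and then read off the Euclidean distance from them. Recall from the parametrization \eqref{parametric_equation} that the upstream curve $\widehat{v_1^\star v_2^\star}$ is traced by $\theta\mapsto(\mathcal{X}(\theta),\mathcal{Y}(\theta))$ with endpoints $v_1^\star=(\mathcal{X}(0),\mathcal{Y}(0))$ and $v_2^\star=(\mathcal{X}(1),\mathcal{Y}(1))$, corresponding to the two Eulerian nodes $v_1,v_2$ of $K_j$ at $\theta=0,1$.

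First I would express the coordinate differences as integrals of the already-controlled derivative. By the fundamental theorem of calculus,
\[
x_2^\star-x_1^\star=\mathcal{X}(1)-\mathcal{X}(0)=\int_0^1\frac{\mathrm{d}\mathcal{X}(\theta)}{\mathrm{d}\theta}\,\mathrm{d}\theta,\qquad y_2^\star-y_1^\star=\int_0^1\frac{\mathrm{d}\mathcal{Y}(\theta)}{\mathrm{d}\theta}\,\mathrm{d}\theta.
\]
Substituting the derivative estimate \eqref{derivative1} of Lemma~\ref{lemma_derivative}, namely $\mathcal{X}'(\theta)=\Delta x+o(\Delta t)$ and $\mathcal{Y}'(\theta)=\Delta y+o(\Delta t)$, and integrating over $\theta\in[0,1]$ gives $x_2^\star-x_1^\star=\Delta x+o(\Delta t)$ and $y_2^\star-y_1^\star=\Delta y+o(\Delta t)$.

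Next I would convert these into $O(h)$ bounds using the scaling hypotheses. Since $\Delta x=x_2-x_1$ and $\Delta y=y_2-y_1$ are differences of nodal coordinates of a single Eulerian element, $|\Delta x|,|\Delta y|\le\mathrm{diam}(K_j)\le h$, so both are $O(h)$; and because $\Delta t\sim h$ the remainder satisfies $o(\Delta t)=o(h)=O(h)$. Hence $x_2^\star-x_1^\star=O(h)$ and $y_2^\star-y_1^\star=O(h)$, and the Euclidean distance follows immediately from
\[
d(v_1^\star,v_2^\star)=\sqrt{(x_2^\star-x_1^\star)^2+(y_2^\star-y_1^\star)^2}=\sqrt{O(h^2)+O(h^2)}=O(h).
\]

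The only point requiring care, and the step I would treat as the main (though mild) obstacle, is that passing the estimate under the integral sign requires the remainder $o(\Delta t)$ in \eqref{derivative1} to hold \emph{uniformly} in $\theta\in[0,1]$, rather than pointwise. This uniformity is guaranteed by the $C^\infty$ regularity of $a$ and $b$ together with the compactness of $[0,1]$ and the joint scaling $h\sim\Delta t\sim\Delta x\sim\Delta y$, which keep the implied constants from degenerating along the edge; once uniformity is granted, the integration is routine and the conclusion follows.
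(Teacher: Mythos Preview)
Your argument is correct and follows the route the paper's lemma ordering anticipates: Lemma~\ref{lemma_derivative} is placed immediately before Lemma~\ref{lemma_distance} precisely so that the derivative bound \eqref{derivative1} can be fed into the endpoint difference, and your use of the fundamental theorem of calculus (equivalently, the mean value theorem applied to $\mathcal{X},\mathcal{Y}$ on $[0,1]$) together with $|\Delta x|,|\Delta y|\le h$ and $\Delta t\sim h$ is exactly the intended reduction. Your remark on the uniformity of the $o(\Delta t)$ remainder in $\theta$ is the only nontrivial point, and you handle it appropriately.
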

%\begin{proof}
%It can easily be proved by applying Lemma \ref{lemma_derivative}. Integrate both sides of \eqref{derivative1} with respect to $\theta$ over $[0,1]$, we have
%\begin{equation}\label{upstream1}
%x^\star_2-x^\star_1=\int_{0}^{1}\frac{\mathrm{d}\mathcal{X}(\theta)}{\mathrm{d}\theta}\mathrm{d}\theta=\int_{0}^{1}\Delta x+o(\Delta t)\mathrm{d}\theta=\Delta x+o(\Delta t).
%\end{equation}
%Since $h\sim\Delta t\sim\Delta x$, we can express $\Delta x+o(\Delta t)$ as $O(h)$. The proof of $y^\star_2-y^\star_1$ is the same as \eqref{upstream1}. The distance between $v^\star_1$ and $v^\star_2$ is
%$$d(v^\star_1,v^\star_2)=\sqrt{(x^\star_2-x^\star_1)^2+(y^\star_2-y^\star_1)^2}=\sqrt{O(h^2)}=O(h).$$
%The proof of Lemma \ref{lemma_distance} is completed.
%\end{proof}

We recall the following classical result without proof; see, e.g., \cite{yosida2012functional}.
\begin{theorem}\label{projection_theorem}
(Projection Theorem). Let $\mathbb{H}$ be a Hilbert space, and let $\mathbb{M}$ be a closed subspace of $\mathbb{H}$. For any given vector $b$, there exists a unique $m^\star\in\mathbb{M}$, such that for any $m\in\mathbb{M}$
\begin{equation}\label{projection_theorem1}
\left\lVert m^\star-b\right\rVert\leqslant\left\lVert m-b\right\rVert,
\end{equation}
where the norm is induced by inner product. Furthermore, the necessary and sufficient condition of the uniqueness of $m^\star$ and equation \eqref{projection_theorem1} is $$(m^\star-b)\perp\mathbb{M}.$$
\end{theorem}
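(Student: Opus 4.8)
The plan is to prove the three assertions—existence, uniqueness, and the orthogonality characterization—using only the inner-product structure of $\mathbb{H}$, with the parallelogram law as the central tool. I would begin by setting $d:=\inf_{m\in\mathbb{M}}\|m-b\|$, which is a well-defined nonnegative real number since the distances are bounded below by zero. For \textbf{existence}, I would select a minimizing sequence $\{m_n\}\subset\mathbb{M}$ with $\|m_n-b\|\to d$ and show it is Cauchy. Applying the parallelogram law to the vectors $m_n-b$ and $m_k-b$ yields
\[
\|m_n-m_k\|^2 = 2\|m_n-b\|^2 + 2\|m_k-b\|^2 - 4\left\|\tfrac{1}{2}(m_n+m_k)-b\right\|^2 .
\]
Because $\mathbb{M}$ is a subspace, $\tfrac{1}{2}(m_n+m_k)\in\mathbb{M}$, so the final term is at least $4d^2$; letting $n,k\to\infty$ then forces $\|m_n-m_k\|\to 0$. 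Completeness of $\mathbb{H}$ gives a limit, and closedness of $\mathbb{M}$ places that limit in $\mathbb{M}$; call it $m^\star$. Continuity of the norm yields $\|m^\star-b\|=d$, so $m^\star$ attains the infimum.

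For \textbf{uniqueness}, I would feed two minimizers $m_1^\star,m_2^\star$ into the same identity: the right-hand side becomes $2d^2+2d^2-4\|\tfrac{1}{2}(m_1^\star+m_2^\star)-b\|^2\le 0$, which forces $\|m_1^\star-m_2^\star\|=0$, hence $m_1^\star=m_2^\star$. For the \textbf{orthogonality characterization}, I would argue both directions. If $(m^\star-b)\perp\mathbb{M}$, then for every $m\in\mathbb{M}$ the Pythagorean expansion $\|m-b\|^2=\|m-m^\star\|^2+\|m^\star-b\|^2\ge\|m^\star-b\|^2$ shows $m^\star$ is optimal. Conversely, assuming optimality, I would fix an arbitrary $m\in\mathbb{M}$, consider $\phi(t):=\|m^\star+tm-b\|^2$, and use that $t=0$ is a minimum; expanding $\phi$ gives $\mathrm{Re}\,\langle m^\star-b,\,m\rangle=0$ for all $m$, and replacing $m$ by $im$ in the complex case supplies the imaginary part, so $(m^\star-b)\perp\mathbb{M}$.

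The main obstacle is the existence step, specifically verifying that the minimizing sequence is Cauchy. This is exactly where both the completeness of $\mathbb{H}$ and the closedness of $\mathbb{M}$ are indispensable, and where the parallelogram law—a feature of the inner-product structure that is unavailable in a general Banach space—does the essential work of converting ``distances nearly minimal'' into ``iterates mutually close.'' Mere convexity of $\mathbb{M}$ would not suffice without this identity. The uniqueness and characterization steps are then routine consequences of the same algebra, so I would expect the bulk of the effort to concentrate on the Cauchy estimate above.
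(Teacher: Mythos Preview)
Your proposal is correct and follows the standard textbook argument via the parallelogram law. The paper does not actually supply its own proof of this theorem; it simply states the result and cites Yosida's \emph{Functional Analysis}, whose proof is essentially the one you have outlined.
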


\begin{lemma}\label{projection_lemma}
Denote $Pu\in V_h^k$ as the projection of $u$ on $V_h^k$, which is for any $v\in V_h^k$, $(Pu-u,v)_{\mathbb{H}}=0$, where $(\cdot,\, \cdot)_{\mathbb{H}}$ denotes the inner product of $\mathbb{H}$. Specifically, we define the inner product as
\begin{equation}\label{projection_vector}
(Pu-u,v)_{\mathbb{H}}=\int_{K_j}\left(Pu-u\right)vdx=0.
\end{equation}
Then there exists a constant $C$, such that
\begin{equation}\label{accuracy_order}
\left\lVert Pu-u\right\rVert\leqslant Ch^{k+1}.
\end{equation}
\end{lemma}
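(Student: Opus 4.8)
The plan is to combine the optimality of the $L^2$ projection (Theorem~\ref{projection_theorem}) with a local polynomial approximation estimate of Bramble--Hilbert/Taylor type. By the characterization \eqref{projection_vector}, and since $V_h^k$ carries no inter-element continuity constraint, the global projection decouples: $Pu|_{K_j}$ is precisely the $L^2(K_j)$-orthogonal projection of $u$ onto $P^k(K_j)$. The Projection Theorem then identifies it as the \emph{best} $L^2$ approximation of $u$ from $P^k(K_j)$, so for any competitor $w\in V_h^k$ we have the elementwise bound $\|Pu-u\|_{L^2(K_j)}\le\|w-u\|_{L^2(K_j)}$. It therefore suffices to exhibit a single admissible $w$ whose approximation error is $O(h^{k+1})$.

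First I would construct $w$ explicitly as a piecewise Taylor polynomial: on each (curved) element $K_j$, set $w|_{K_j}=T_k u$, the degree-$k$ Taylor expansion of $u$ about the barycenter $c_j$ of $K_j$. Because $x,y$ enter polynomially, $T_k u\in P^k(K_j)$, so $w\in V_h^k$ is admissible. Using Taylor's theorem with remainder and the assumed smoothness $u\in C^{k+1}$, the pointwise error on $K_j$ is controlled by the $(k+1)$-st derivatives of $u$ times $|(x,y)-c_j|^{k+1}$; since $K_j\subset B(c_j,h)$ with $h=\mathrm{diam}(K_j)$, this yields $|u-T_k u|\le C\,|u|_{C^{k+1}}\,h^{k+1}$ uniformly on $K_j$.

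Next I would integrate this pointwise bound. Squaring and integrating over $K_j$ gives $\|u-T_k u\|_{L^2(K_j)}\le C h^{k+1}|K_j|^{1/2}$; summing over $j$ and using $\sum_j|K_j|=|\Omega|$ produces $\|u-w\|_{L^2(\Omega)}\le C h^{k+1}|\Omega|^{1/2}$ (if the norm in the lemma is read locally, the elementwise bound is already the desired estimate and no summation is needed). Chaining this with the optimality bound yields $\|Pu-u\|\le\|w-u\|\le C h^{k+1}$, which is exactly \eqref{accuracy_order}.

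The point requiring care --- more than a genuine obstacle --- is that the elements here are curvilinear, so the argument must avoid any appeal to affine reference mappings or inverse inequalities, which would otherwise demand shape-regularity of the curved triangulation. This is precisely why I would base the estimate on the best-approximation property together with a direct Taylor remainder on the \emph{physical} element: the only geometric input used is the containment $K_j\subset B(c_j,h)$, which holds by the definition of $h$. I would finally record that the resulting constant $C$ depends on $\Omega$ and on $|u|_{C^{k+1}(\Omega)}$ but is independent of $h$, so the bound is uniform in the mesh parameter.
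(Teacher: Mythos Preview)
Your argument is correct and follows essentially the same approach the paper sets up: Theorem~\ref{projection_theorem} is stated immediately before this lemma precisely to supply the best-approximation inequality you invoke, and the remaining step---choosing a Taylor polynomial in $P^k(K_j)$ as the competitor $m$ and bounding its remainder by $Ch^{k+1}$---is the standard completion the paper has in mind. Your remark that only the containment $K_j\subset B(c_j,h)$ is needed (rather than an affine reference map) is a helpful clarification, though note that in this paper the background elements $K_j$ on which $V_h^k$ lives are straight-sided triangles; the curvilinear geometry enters only through the upstream cells $K_j^\star$.
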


To numerically approximate $\widehat{v^\star_1v^\star_2}$ shown in Fig. \ref{auxiliary_mesh} (right) as the blue curve, we defined parametric equations \eqref{parametric}, the red curve $\overline{v^\star_1v^\star_2}$ in Fig. \ref{auxiliary_mesh} (right). In the $k$-order polynomial space, the parametric equations can be express as
\begin{equation}\label{parametric2}
\mathbf{x}^F(\theta)=\left[\begin{matrix}
x^F \\
y^F \\
\end{matrix} \right]=\left[\begin{matrix}
\alpha^{x,F}_{k} \\
\alpha^{y,F}_{k} \\
\end{matrix} \right]\theta^k+\cdot\cdot\cdot+\left[\begin{matrix}
\alpha^{x,F}_{1} \\
\alpha^{y,F}_{1} \\
\end{matrix} \right]\theta+\left[\begin{matrix}
\alpha^{x,F}_{0} \\
\alpha^{y,F}_{0} \\
\end{matrix} \right],\ \ \theta\in[0,1].
\end{equation}

\begin{lemma}\label{edge_approximate}
If $a(x,y,t)$, $b(x,y,t)\in C^{\infty}$ and $h\sim\Delta x\sim\Delta y$, there exist constant $C_1,C_2$, such that
\begin{equation}\label{xy_h^k+1}
\left\lVert\mathcal{X}(\theta)-x^F(\theta)\right\rVert\leqslant C_1h^{k+1},\ \ \left\lVert\mathcal{Y}(\theta)-y^F(\theta)\right\rVert\leqslant C_2h^{k+1}.
\end{equation}
Furthermore, the area between $\widehat{v^\star_1v^\star_2}$ and $\overline{v^\star_1v^\star_2}$, 
\begin{equation}\label{d_h^k+1}
d(\widehat{v^\star_1v^\star_2},\overline{v^\star_1v^\star_2}):=\int_{0}^{1}\sqrt{\left(\mathcal{X}-x^F\right)^2+\left(\mathcal{Y}-y^F\right)^2}\mathrm{d}\theta\leqslant Ch^{k+1},
\end{equation}
for a constant $C$.
\end{lemma}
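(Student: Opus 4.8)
The plan is to recognize \eqref{xy_h^k+1} as a Lagrange interpolation estimate for the exact characteristic curve and to control it through the high-order derivative bound already furnished by Lemma~\ref{lemma_derivative2}. First I would record the structural fact underlying the construction \eqref{parametric}--\eqref{parametric2}: by the isoparametric shape-function definition, the polynomial components $x^F(\theta)$ and $y^F(\theta)$ are exactly the degree-$k$ polynomials that interpolate the exact upstream curve at the $k+1$ nodal parameters $0=\theta_0<\theta_1<\cdots<\theta_k=1$ along the edge (for $k=2$ these are $\theta\in\{0,\tfrac12,1\}$). Indeed, each node $v_i$ on $\overline{v_1v_2}$ is traced back along the characteristics to $v_i^\star=(\mathcal{X}(\theta_i),\mathcal{Y}(\theta_i))$, a point lying on the exact curve \eqref{parametric_equation}, and the shape functions are normalized so that $x^F(\theta_i)=\mathcal{X}(\theta_i)$ and $y^F(\theta_i)=\mathcal{Y}(\theta_i)$. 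Hence $\mathcal{X}-x^F$ and $\mathcal{Y}-y^F$ vanish at the $k+1$ interpolation nodes.

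Next I would apply the classical Lagrange remainder formula: for each $\theta\in[0,1]$ there is $\xi\in(0,1)$ with
\begin{equation*}
\mathcal{X}(\theta)-x^F(\theta)=\frac{1}{(k+1)!}\,\mathcal{X}^{(k+1)}(\xi)\prod_{i=0}^{k}(\theta-\theta_i),
\end{equation*}
and similarly for $\mathcal{Y}-y^F$. Since $\mathcal{X},\mathcal{Y}\in C^\infty$ (because $a,b\in C^\infty$) the remainder is well defined, and the nodal product satisfies $\bigl|\prod_{i=0}^{k}(\theta-\theta_i)\bigr|\le 1$ for $\theta,\theta_i\in[0,1]$. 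The decisive input is Lemma~\ref{lemma_derivative2}, which gives $|\mathcal{X}^{(k+1)}(\xi)|=O(\Delta t^{k+1})$ and $|\mathcal{Y}^{(k+1)}(\xi)|=O(\Delta t^{k+1})$; combined with $h\sim\Delta t$ this yields, uniformly in $\theta$, the bounds $\|\mathcal{X}-x^F\|\le C_1 h^{k+1}$ and $\|\mathcal{Y}-y^F\|\le C_2 h^{k+1}$, which is \eqref{xy_h^k+1}. The estimate \eqref{d_h^k+1} then follows immediately by inserting these pointwise bounds under the integral,
\begin{equation*}
\int_0^1\sqrt{(\mathcal{X}-x^F)^2+(\mathcal{Y}-y^F)^2}\,\mathrm{d}\theta\le\sqrt{C_1^2+C_2^2}\,h^{k+1}=:Ch^{k+1}.
\end{equation*}

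The step I expect to be the main obstacle is the derivative control, and this is precisely why Lemma~\ref{lemma_derivative2} is positioned as a prerequisite: without the sharp scaling $\mathrm{d}^{k+1}\mathcal{X}/\mathrm{d}\theta^{k+1}=O(\Delta t^{k+1})$, the interpolation remainder would be only $O(1)$ and the claimed order would collapse. A secondary technical point I would address carefully is that the interpolation parameter of the upstream curve coincides with the straight-line parameter $\theta$ of the Eulerian edge, so that the nodes at which $x^F$ and $\mathcal{X}$ agree are genuinely the same $\theta_i$; this is guaranteed by the node-wise characteristic tracing together with the partition-of-unity property $\sum_i N_i\equiv 1$ of the shape functions. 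Finally, if one wishes to account for the fact that the points $v_i^\star$ are produced by a Runge--Kutta tracer rather than computed exactly, I would note that taking an RK method of order at least $k+1$ makes the node-wise tracing error $O(\Delta t^{k+2})$, which is dominated by the interpolation term and hence does not affect the stated order.
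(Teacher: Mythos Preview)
Your argument is correct and is precisely the route the paper sets up: $x^F,y^F$ are the degree-$k$ Lagrange interpolants of $\mathcal{X},\mathcal{Y}$ at the edge nodes, the remainder is controlled by the $(k{+}1)$-st derivative, and Lemma~\ref{lemma_derivative2} supplies the scaling $\mathcal{X}^{(k+1)},\mathcal{Y}^{(k+1)}=O(\Delta t^{k+1})=O(h^{k+1})$, after which \eqref{d_h^k+1} follows by direct integration. Your side remarks on the matching of the parameter $\theta$ between the Eulerian edge and the shape-function restriction, and on the RK tracing error being of higher order, are also in line with the paper's construction.
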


\begin{theorem}\label{consistency}
(Consistent error analysis). The numerical solution  $u^{n+1}(x,y)\in V_h^2$ over $K_j$ satisfies
\begin{equation}
\left\lVert u^{n+1}(x,y)-u(x,y,t^{n+1}) \right\rVert_{L^2}=O(h^3)
\end{equation}
where the norm denotes the $L^2$ norm over $K_j$, with $u(x,y,t)$, $a(x,y,t)$, $b(x,y,t)\in C^{\infty}$ and $h\sim\Delta t\sim\Delta x\sim\Delta y$ if $u^n(x,y)$ is a third order approximation of analytical solution at $t^n$ time level.
\end{theorem}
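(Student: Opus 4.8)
The plan is to compare the numerical solution with the $L^2$ projection of the exact solution and to exploit that both live in the finite-dimensional space $P^2(K_j)$. Write $Pu(\cdot,t^{n+1})\in V_h^2$ for the projection of Lemma~\ref{projection_lemma}; since $\|Pu(\cdot,t^{n+1})-u(\cdot,t^{n+1})\|_{L^2}\le Ch^3$ already follows from \eqref{accuracy_order} with $k=2$, by the triangle inequality it suffices to bound $w:=u^{n+1}-Pu(\cdot,t^{n+1})$. The exact solution satisfies the Reynolds identity $\int_{K_j}u(\cdot,t^{n+1})\Psi=\int_{\widetilde{K}_j(t^n)}u(\cdot,t^n)\psi(\cdot,t^n)$ for every $\Psi\in P^2(K_j)$, where $\widetilde{K}_j(t^n)$ is the exact upstream region and $\psi$ solves the adjoint problem \eqref{adjoint} with final datum $\Psi$. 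Subtracting this from the scheme \eqref{sl} gives, for all $\Psi$,
\begin{equation*}
\int_{K_j} w\,\Psi=\int_{K_j^\star}u^n\psi^\star-\int_{\widetilde{K}_j(t^n)}u(\cdot,t^n)\psi(\cdot,t^n),
\end{equation*}
so the task reduces to estimating this moment functional in the dual norm.

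Next I would split the right-hand side into three contributions: (I) replacing $u^n$ by the exact $u(\cdot,t^n)$ on $K_j^\star$; (II) replacing the least-squares reconstruction $\psi^\star\in P^2(K_j^\star)$ by the exact adjoint $\psi(\cdot,t^n)$; and (III) replacing $K_j^\star$ by $\widetilde{K}_j(t^n)$, i.e.\ an integral over the symmetric difference $K_j^\star\triangle\widetilde{K}_j(t^n)$. Normalizing $\|\Psi\|_{L^2(K_j)}=1$, the inverse inequality gives $\|\Psi\|_{\infty}=O(h^{-1})$, and since $\psi$ is the characteristic transport of $\Psi$ one has $\|\psi\|_\infty\sim\|\Psi\|_\infty=O(h^{-1})$ and $\|\psi^\star\|_{L^2(K_j^\star)}=O(1)$. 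It then suffices to show each of (I)--(III) is $O(h^3)$, since $\|w\|_{L^2}=\sup_{\|\Psi\|_{L^2}=1}\int_{K_j}w\,\Psi$. Contribution (I) is immediate from Cauchy--Schwarz, the input hypothesis $\|u^n-u(\cdot,t^n)\|_{L^2}=O(h^3)$, and the $L^2$-stability of the transport.

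The crux is (II) and (III). For (III), bound $|\int_{K_j^\star\triangle\widetilde{K}_j}u\psi|\le\|u\psi\|_\infty\,|K_j^\star\triangle\widetilde{K}_j|=O(h^{-1})\,|K_j^\star\triangle\widetilde{K}_j|$; the pointwise edge gap $\|\mathcal{X}-x^F\|,\|\mathcal{Y}-y^F\|\le Ch^3$ of Lemma~\ref{edge_approximate} controls only the sliver width, and I would show the symmetric-difference area is one order smaller, $O(h^4)$, by writing each sliver area as $\tfrac12\oint(x\,dy-y\,dx)$ and integrating by parts so that the gap $O(h^3)$ is paired with the tangent speed $\mathcal{X}',\mathcal{Y}'=O(h)$ from Lemma~\ref{lemma_derivative}; using the raw gap as an area would lose an order, so this step is essential and gives (III)$=O(h^3)$. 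For (II), bound $|\int_{K_j^\star}u(\psi^\star-\psi)|\le C\|\psi^\star-\psi\|_{L^2(K_j^\star)}|K_j^\star|^{1/2}=O(h)\,\|\psi^\star-\psi\|_{L^2(K_j^\star)}$; I would estimate the degree-two reconstruction error by the best polynomial approximation on the $O(h)$-diameter element, tracking that $\psi=\Psi\circ F$ has third derivatives $O(h^{-2})$ (the degree-two $\Psi$ contributes second derivatives $O(h^{-3})$ multiplied by the $O(h)$ curvature of the smooth flow map $F$, consistent with Lemmas~\ref{lemma_derivative}--\ref{lemma_derivative2}), whence $\|\psi^\star-\psi\|_{L^2(K_j^\star)}=O(h^2)$ and (II)$=O(h^3)$.

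The main obstacle I anticipate is making (II) fully rigorous: one must certify that the seven-point least-squares fit for $P^2(K_j^\star)$ is uniformly well-conditioned---unisolvent with an $h$-independent stability constant on the mildly distorted upstream elements---so that its error is genuinely comparable to the best degree-two approximation; the distortion control of Lemmas~\ref{lemma_derivative} and \ref{lemma_derivative2} is exactly what I would invoke to secure this. A secondary delicate point is the careful bookkeeping of inverse-inequality powers so that all three contributions balance at precisely $O(h^3)$ rather than $O(h^2)$, together with confirming (via the analysis underlying Remark~\ref{remark3}) that the curved partition of $K_j^\star$ does not self-intersect for small $h$, so that the area integrals are evaluated without geometric error.
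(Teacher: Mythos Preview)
Your decomposition into the three contributions (I)--(III) and the subsequent duality argument is correct and matches the roadmap the paper lays out in the paragraph introducing Lemmas~\ref{lemma_derivative}--\ref{edge_approximate}: geometric control of the upstream cell, projection/approximation error of the data, and approximation error of the upstream boundary, combined through the weak form~\eqref{sl}. Your handling of (III)---gaining the extra factor of $h$ by writing the sliver area as a Green-type line integral so that the $O(h^{k+1})$ gap from Lemma~\ref{edge_approximate} is multiplied by the $O(h)$ tangent speed from Lemma~\ref{lemma_derivative}---is precisely the mechanism the paper's lemma structure is set up to exploit, and your chain-rule bookkeeping for (II) (using $\nabla^2 F=O(\Delta t)$ and $\nabla^3\Psi=0$ to get $|\nabla^3\psi|=O(h^{-2})$) is the right way to turn Lemmas~\ref{lemma_derivative}--\ref{lemma_derivative2} into the needed $\|\psi^\star-\psi\|_{L^2(K_j^\star)}=O(h^2)$.

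The one point where you go slightly beyond what the paper's lemmas literally supply is the unisolvence/uniform conditioning of the seven-point least-squares fit on the distorted upstream element; you are right that this must be checked and that Lemmas~\ref{lemma_derivative}--\ref{lemma_distance} (mild distortion, $O(h)$ diameter) are what make it routine. Otherwise your argument is complete and in line with the paper's approach.
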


	\section{Numerical Experiments}\label{section:numerical}
	This section presents some numerical experiments to demonstrate the effectiveness of  the SLDG scheme on curvilinear unstructured triangular meshes. 
%A comparison is made between the numerical results and the exact solution, and the convergence rates are computed. 
We assume the computational domain $\Omega$ is devided into $N$ elements. %, which are quasi-uniform and unstructured triangular elements. 
The refinement of the mesh is achieved by joining the midpoints of the edges of the elements, that is, the original element is divided into four sub-elements, and the new mesh point is the midpoint of the edge of the original element. For the sake of simplicity, we take $r_j=\frac{2|K_j|}{|\partial K_j|}$ as the radius pertaining to the inscribed circle within the element $K_j$ as in \cite{cai2022eulerian,luo2019quasi}, where $|K_j|$ and $|\partial K_j|$ are the area and the perimeter of the element $K_j$ respectively. Unless otherwise specified herein, the time step is chosen as
	$$\Delta t=\text{CFL}\cdot\frac{\min_j r_j}{\max_j\max_{face}|\mathbf{V\cdot n}|},$$
	where  $\mathbf{V}$ is the velocity field, and $\mathbf{n}$ is the outward unit normal vector of the face.

\begin{example}
 (Rigid body rotation on a circle domain). 
	Consider the rigid body rotation transport equation on circle domain as Fig.\ref{circle} (left).
	\begin{equation}\label{rigid_body_rotation}
		u_t-(yu)_x+(xu)_y=0,\ \ (x,y)\in \{(x,y)|x^2+y^2 \leqslant\pi^2\},
	\end{equation}
	with the initial condition $u(x,y,0)=e^{-3x^2-3y^2}$. Due to the symmetry of initial values and computational regions, the analytical solution remains unchanged in terms of computation time $T=2\pi$. 
\end{example}

	We apply the proposed $P^k$ SLDG method on a circular area with unstructured mesh to solve this problem. 
 Table \ref{u_t-(yu)_x+(xu)_y=0} reports mesh-refinement results, showing second- and third-order accuracy for the  $P^1$ and $P^2$ schemes, respectively.
 To numerically prove the stability, we perform a numerical solution for \eqref{rigid_body_rotation} with large final time $T=50\pi$ and the numerical results are shown in Table \ref{rigid_body_rotation_T=50}.
 
	\begin{table}[htbp]
		\centering
		\caption{$L^1$, $L^2$ and $L^{\infty}$ errors along with the associated the accuracy orders for the 2D SLDG for rigid body rotation with CFL = 10 and final time $T=2\pi$.}
		\label{u_t-(yu)_x+(xu)_y=0}
		\begin{tabular}{lcccccccc}
			\toprule
			\textbf{$P^k$} & \textbf{$M$} & \textbf{$r_{max}$} & \textbf{$L^1$-error} & \textbf{order} & \textbf{$L^2$-error} & \textbf{order}  & \textbf{$L^{\infty}$-error} & \textbf{order} \\ \midrule
%			& 160 & 0.781 & 2.63E-02 &  & 6.78E-02 & & 6.76E-02 &  \\
%			$P^0$ & 522 & 0.392 & 1.62E-02 & 0.82 & 4.58E-02 & 0.76 & 6.06E-02 & 0.19 \\
%			& 1884 & 0.198 & 9.17E-03 & 0.88 & 2.58E-02 & 0.89 & 4.07E-02 & 0.62 \\ \midrule
			& 160 & 1.521 & 3.82E-03 &  &     1.13E-02 &  &     8.88E-02 & \\
			$P^1$ & 522 & 0.750 & 1.18E-03 & 1.99 & 4.08E-03 & 1.72 & 6.67E-02 & 0.49 \\
			& 1884 & 0.396 & 3.16E-04 & 2.05 & 1.15E-03 & 1.97 & 2.45E-02 & 1.56 \\
			& 7432 & 0.189 & 8.00E-05 & 2.00 & 2.90E-04 & 2.01 & 6.58E-03 & 1.92 \\ \midrule
			& 160 & 1.521 & 7.07E-04 &  & 2.44E-03 &  & 3.58E-02 & \\
			$P^2$ & 522 & 0.750 & 1.01E-04 & 3.29 & 3.67E-04 & 3.20 & 5.32E-03 & 3.22 \\
			& 1884 & 0.396 & 1.52E-05 & 2.95 & 5.68E-05 & 2.91 & 1.08E-03 & 2.49 \\
			& 7432 & 0.189 & 2.07E-06 & 2.90 & 7.78E-06 & 2.90 & 1.51E-04 & 2.87 \\
			\bottomrule
		\end{tabular}
	\end{table}

	\begin{table}[!htbp]
		\centering
		\caption{$L^1$, $L^2$ and $L^{\infty}$ errors and the associated the accuracy orders for the 2D SLDG for constant-coefficient transport equation with $\text{CFL} = 1$ and final time $T=50\pi$.}
		\label{rigid_body_rotation_T=50}
		\begin{tabular}{lcccccccc}
			\toprule
			\textbf{$P^k$} & \textbf{$M$} & \textbf{$r_{max}$} & \textbf{$L^1$-error} & \textbf{order} & \textbf{$L^2$-error} & \textbf{order}  & \textbf{$L^{\infty}$-error} & \textbf{order} \\ \midrule
			& 160 & 1.521 & 1.01E-02 &  & 2.81E-02 &  & 9.01E-02 & \\
			$P^1$ & 522 & 0.750 & 3.02E-03 & 2.04 & 1.02E-02 & 1.72 & 4.27E-02 & 1.26 \\
			& 1884 & 0.396 & 5.46E-04 & 2.67 & 2.02E-03 & 2.52 & 1.16E-02 & 2.03 \\
			& 7432 & 0.189 & 1.01E-04 & 2.46 & 3.85E-04 & 2.42 & 4.08E-03 & 1.52 \\ \midrule
			& 160 & 1.521 & 1.94E-03 &  & 6.89E-03 &  & 3.89E-02 & \\ 
			$P^2$ & 522 & 0.750 & 1.41E-04 & 4.43 & 5.18E-04 & 4.38 & 5.85E-03 & 3.21 \\
			& 1884 & 0.396 & 1.97E-05 & 3.07 & 7.50E-05 & 3.01 & 1.14E-03 & 2.54 \\
			& 7432 & 0.189 & 2.23E-06 & 3.17 & 8.27E-06 & 3.21 & 1.54E-04 & 2.92 \\
			\bottomrule
		\end{tabular}
	\end{table}

\begin{example}
(Swirling deformation flow). 
	We consider 
	\begin{equation}\label{swirling_deformation_flow}
		u_t-\big(\cos^2(\frac{x}{2})\sin(y)g(t)u\big)_x+\big(\sin(x)\cos^2(\frac{y}{2})g(t)u\big)_y=0,
	\end{equation}
	where $g(t)=\cos(\frac{\pi t}{T})\pi$ and $(x,y)\in \{(x,y)|x^2+y^2\leqslant\pi^2\}$. We establish the initial condition to be the subsequent smooth cosine bell:
	$$
	u(x,y,0)=\left\{
	\begin{array}{ll}
		r_0^b\cos^6(\frac{r^b\pi}{2r_0^b}),\ \ &if r^b<r^b_0 ,\\
		0,\ \ &otherwise,
	\end{array}
	\right.
	$$
	where $r^b=\sqrt{(x-x_0^b)^2+(y-y_0^b)^2}$ denotes the distance between $(x,y)$ in the computational domain and the center of the cosine bell $(x_0^b,y_0^b)=(0.45\pi,0)$ and $r_0^b=0.45\pi$. \end{example}

	\begin{figure}[htbp]
		\centering
		\includegraphics[scale=0.27]{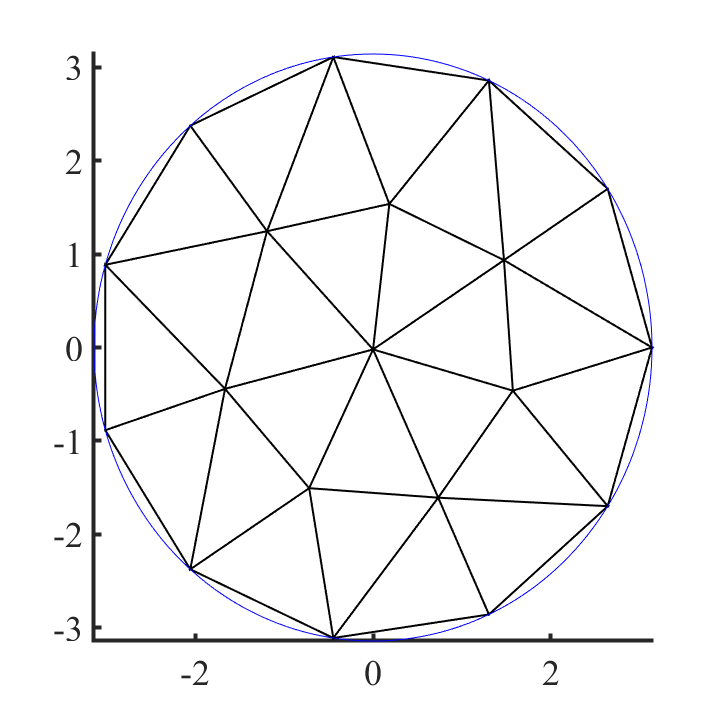}
		\hspace{0in}
		\includegraphics[scale=0.27]{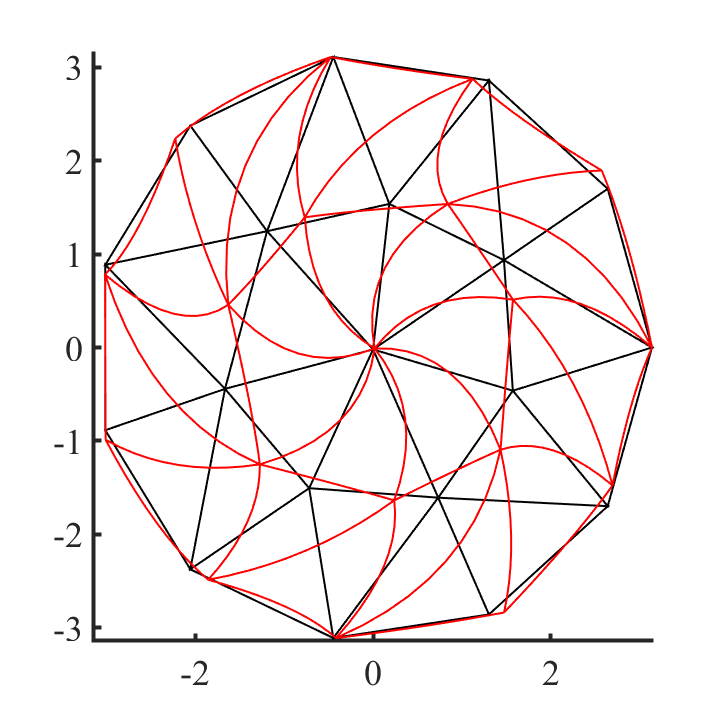}
		\caption{Initial triangular meshes on a circle domain with 25 elements (left) and corresponding upstream meshes with $\Delta t=\frac{\pi}{2}$ of red curves (right).}
		\label{circle}
	\end{figure}
	In the simulations, we set $CFL=10.5$ and the solution is computed up to $T=2\pi$. In Fig.\ref{circle}, left panel shows the background mesh on a circle domain with 25 elements and right panel shows the upstream elements with $\Delta t=\frac{\pi}{2}$. From the right panel of Fig.\ref{circle}, we can see some fine and complex structures that need to be captured. To avoid incorrect topology, we must account for the partial overlapping of edges between the background and upstream meshes and the close proximity of their edge endpoints.
	Table \ref{swirling deformation flow} summarizes a convergence study for $P^k\ (k=1,2)$ SLDG method in terms of the $L^1$, $L^2$ and $L^{\infty}$ errors and the associated orders of accuracy. Under mesh refinement, the $P^1$ SLDG scheme has second-order accuracy with $L^1$, $L^2$ and $L^{\infty}$ errors which is similar with the existing scheme with straight-sided upstream triangle, and more importantly, the $P^2$ SLDG scheme has third-order accuracy with $L^1$, $L^2$ and $L^{\infty}$ errors, which obtains more accurate solution functions than scheme in \cite{cai2017high,guo2014conservative}.\\
	We emphasize the difference between the last two parts of the Table \ref{swirling deformation flow} which are marked as red. We denote the two-dimensional SLDG scheme with upstream element of triangle on $P^2$ as $P^2_s$. By observation, we find that the $L^1$, $L^2$ and $L^{\infty}$ errors on all grids of the scheme with upstream element of quadratic curvilinear triangle are all smaller than that of the scheme with upstream element of triangle, and by contrast, the orders of accuracy of the scheme with upstream element of quadratic curvilinear triangle is more stable and more in line with third-order accuracy.

	\begin{table}[htbp]
		\centering
		\caption{$L^1$, $L^2$ and $L^{\infty}$ errors along with the associated the accuracy orders for the 2D SLDG for swirling deformation flow with $CFL = 10.5$ and $T=1.5$.}
		\label{swirling deformation flow}
		\begin{tabular}{lcccccccc}
			\toprule
			\textbf{$P^k$} & \textbf{$M$} & \textbf{$r_{max}$} & \textbf{$L^1$-error} & \textbf{order} & \textbf{$L^2$-error} & \textbf{order} & \textbf{$L^{\infty}$-error} & \textbf{order} \\ \midrule
			\multicolumn{9}{c}{2D SLDG scheme with quadratic curvilinear triangle}            \\ \midrule
%			& 160 & 0.781 & 1.42E-02 &  & 6.65E-02 &  & 1.21E-01 &  \\
%			$P^0$ & 522 & 0.392 & 1.22E-02 & 0.36 & 5.43E-02 & 0.34 & 8.68E-02 & 0.56 \\
%			& 1884 & 0.198 & 6.59E-03 & 0.96 & 3.48E-02 & 0.69 & 6.10E-02 & 0.55 \\ \midrule
            & 160 & 1.521 & 3.55E-03 &  & 1.48E-02 &  & 2.14E-01 & \\
			$P^1$ & 522 & 0.750 & 1.22E-03 & 1.80 & 5.19E-03 & 1.77 & 4.47E-02 & 2.65 \\
			& 1884 & 0.396 & 1.59E-04 & 3.18 & 6.89E-04 & 3.15 & 9.83E-03 & 2.36 \\
			& 7432 & 0.189 & 3.55E-05 & 2.18 & 1.66E-04 & 2.07 & 3.59E-03 & 1.47 \\ \midrule
			& 160 & 1.521 & 1.13E-03 & & 4.11E-03 &  & 3.78E-02 & \\
			{\color{red}$P^2$} & 522 & 0.750 & 1.28E-04 & 3.67 & 5.58E-04 & 3.38 & 6.36E-03 & 3.01 \\ 
			& 1884 & 0.396 & 1.09E-05 & 3.84 & 4.86E-05 & 3.80 & 9.61E-04 & 2.94 \\
			& 7432 & 0.189 & 1.36E-06 & 3.03 & 6.33E-06 & 2.97 & 1.62E-04 & 2.60 \\ \midrule
			\multicolumn{9}{c}{2D SLDG scheme with triangle}            \\ \midrule
			& 160 & 1.521 & 2.41E-03 &  & 1.01E-02 &  & 8.86E-02 & \\
			{\color{red} $P^2_s$} & 522 & 0.750 & 7.27E-04 & 2.03 & 4.12E-03 & 1.53 & 8.67E-02 & 0.04 \\ 
			& 1884 & 0.396 & 1.32E-04 & 2.66 & 7.80E-04 & 2.59 & 1.21E-02 & 3.07 \\
			& 7432 & 0.189 & 3.52E-05 & 1.93 & 2.14E-04 & 1.88 & 3.34E-03 & 1.88 \\
			\bottomrule
		\end{tabular}
	\end{table}

	After that, since the consistency of the proposed SLDG schemes on unstructured meshes has been analyzed, we study the numerical stabilities of it. We present the plots of $L^1$ and $L^2$ errors versus CFL of the SLDG method in Fig.\ref{fig_swirling_cfl} as in \cite{cai2022eulerian}. From the figure, we can see that the proposed SLDG schemes on unstructured meshes is stable for arbitrarily large time-stepping sizes.
	\begin{figure}[htbp]
		\centering
		\includegraphics[scale=0.27]{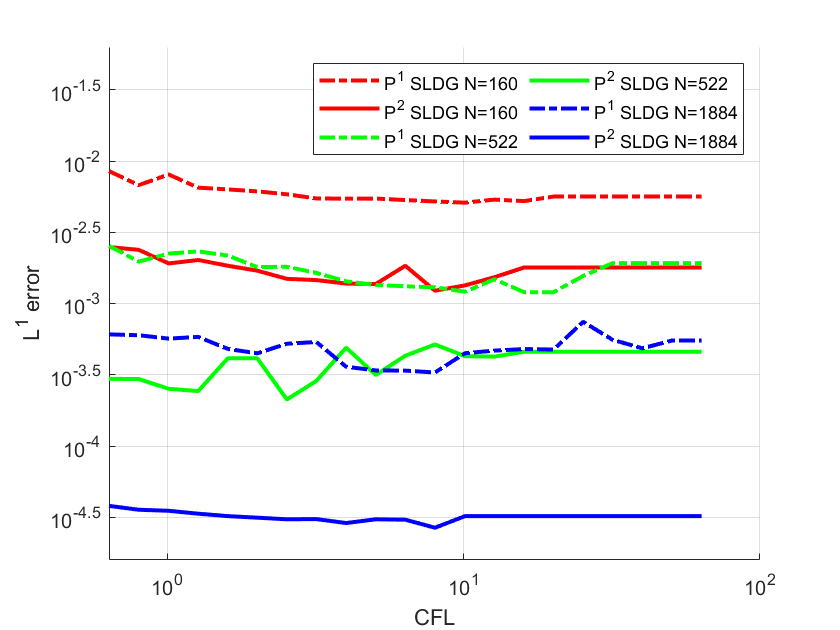}
		\hspace{0in}
		\includegraphics[scale=0.27]{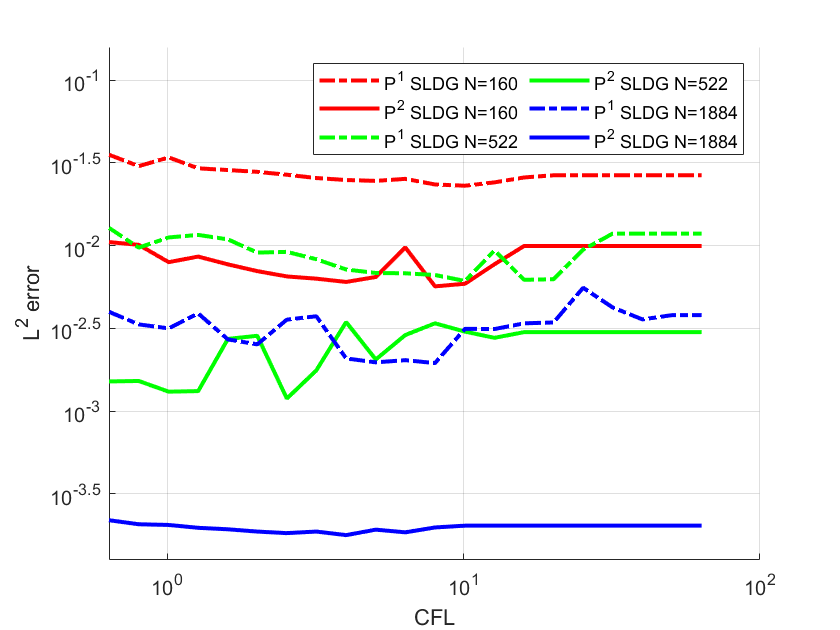}
		\caption{\textbf{Swirling deformation flow.} The $L^1$ and $L^2$ errors versus CFL of $P^1$ and $P^2$ SLDG scheme with $T=1$ on a circle domain with the unstructured meshes of 160, 522 and 1884.}
		\label{fig_swirling_cfl}
	\end{figure}
	
	Moreover, we test the performance of the proposed scheme with swirling deformation flow problem with a slotted disk as a discontinuous initial condition shown in the left top panel of Fig.\ref{fig_swirling_discontinuities}. We implement the experiment with $T=2\pi$ and $CFL=10.5$ on an unstructured mesh of 28996, and the right top panel shows the numerical solution at $t=\frac{\pi}{2}$. The bottom panels of Fig.\ref{fig_swirling_discontinuities} show the numerical solution at $t=\pi$ with a detailed view showing complex structures and the curvilinear upstream mesh. From the figure, we can see that the SLDG scheme on curvilinear unstructured meshes can capture complex structures and greatly reduce the computational complexity. 
	\begin{figure}[htbp]
		\centering
		\includegraphics[scale=0.27]{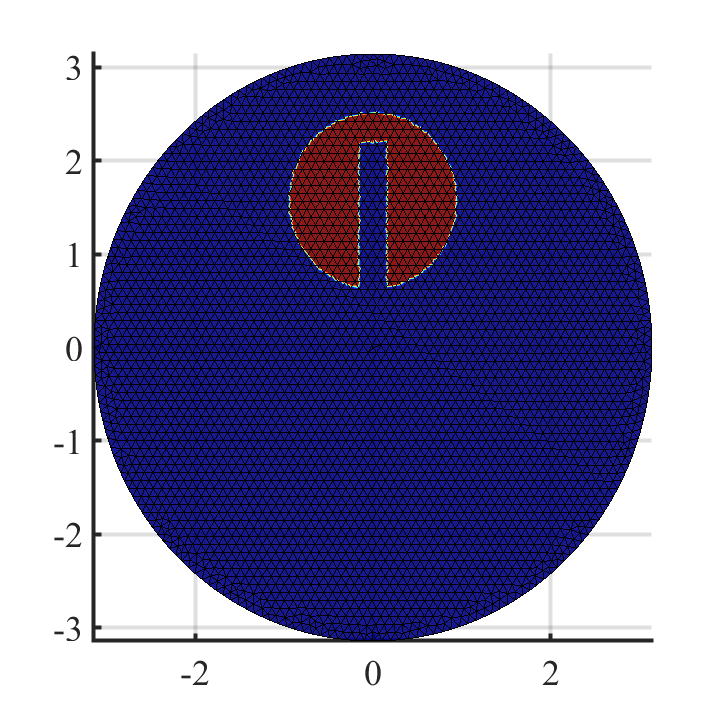}
		\hspace{-0.15in}
		\includegraphics[scale=0.27]{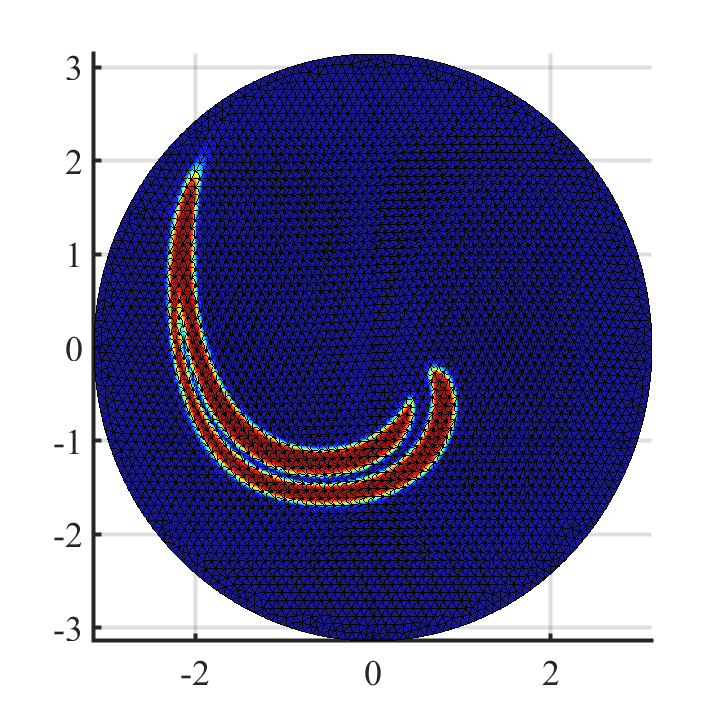}
		\hspace{0in}
		\includegraphics[scale=0.162]{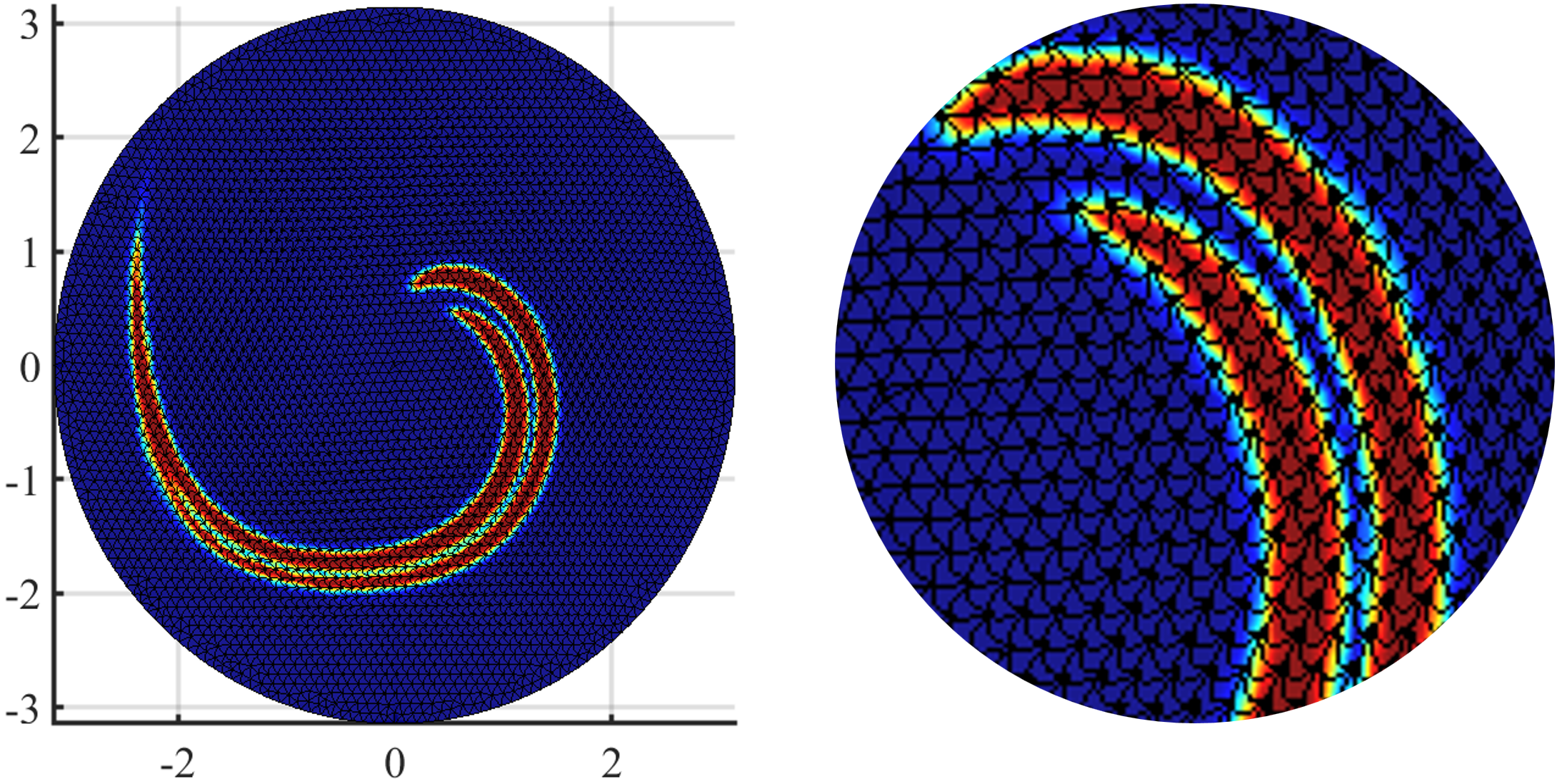}
		\caption{\textbf{Swirling deformation flow.} A slotted disk as initial condition with discontinuties on the left top panel and the numerical solution at $t=\frac{\pi}{2}$ on the right top panel and $t=\pi$ on the left bottom panel. The right bottom panel shows a detailed view of the left bottom panel with curvilinear upstream mesh.}
		\label{fig_swirling_discontinuities} 
	\end{figure}

\begin{example} (Rigid body rotation with discontinuities). 
	We examine numerically solving the rigid body rotation transport equation (\ref*{rigid_body_rotation}) with discontinuous initial condition showed in Fig.\ref*{weno_initial}. The initial data is composed of a cone, a smooth hump along with a slotted disk. 
	\begin{figure}[htbp]
		\centering
		\includegraphics*[scale=0.285]{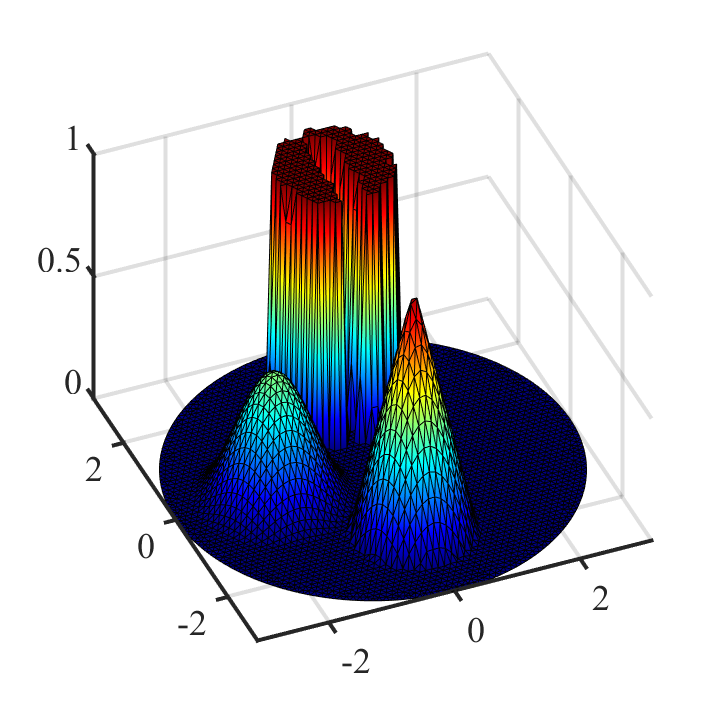}
		\caption{The shape of initial condition with discontinuities which is composed of a slotted disk, a cone and a smooth hump on a circle computational domain with unstructured mesh of 7432 elements.}
		\label{weno_initial}
	\end{figure}
\end{example}

	We apply a circle computational domain to compute the problem \eqref{rigid_body_rotation} solved by RKDG and SLDG schemes and with $k=1,2$ after ten whole rotations, respectively, and the result are plotted in Fig.\ref{weno_circle}. For the RKDG scheme, we set $CFL=0.3$ for $P^1$ and $CFL=0.15$ for $P^2$, for the SLDG scheme, we set $CFL=10^2$ for $P^1$ and $P^2$ schemes.
%	\begin{figure}[H]
%		\centering
%		\includegraphics[scale=0.27]{Figures/weno9(2).png}
%		\hspace{0in}
%		\includegraphics[scale=0.27]{Figures/weno10(2).png}
%		\hspace{0in}
%		\includegraphics[scale=0.24]{Figures/weno9(1).png}
%		\hspace{0in}
%		\includegraphics[scale=0.24]{Figures/weno10(1).png}
%		\caption{\textbf{Rigid body rotation.} The numerical solution with $k=1$(left) and $k=2$(right) of the three shape after a full rotation on square computational domain with unstructured mesh of 7432 elements.}
%		\label{weno_square}
%	\end{figure}

	\begin{figure}[htbp]
		\centering
		\includegraphics[scale=0.27]{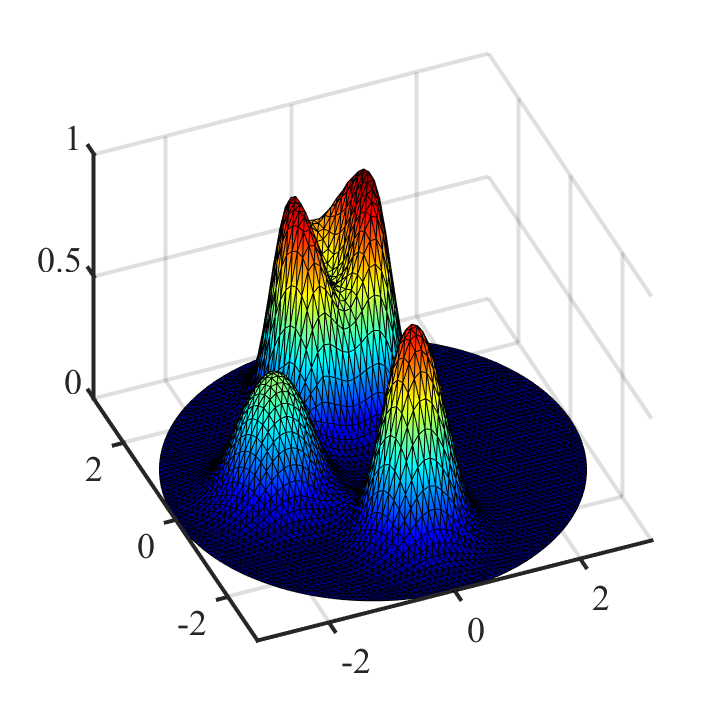}
		\hspace{0in}
		\includegraphics[scale=0.27]{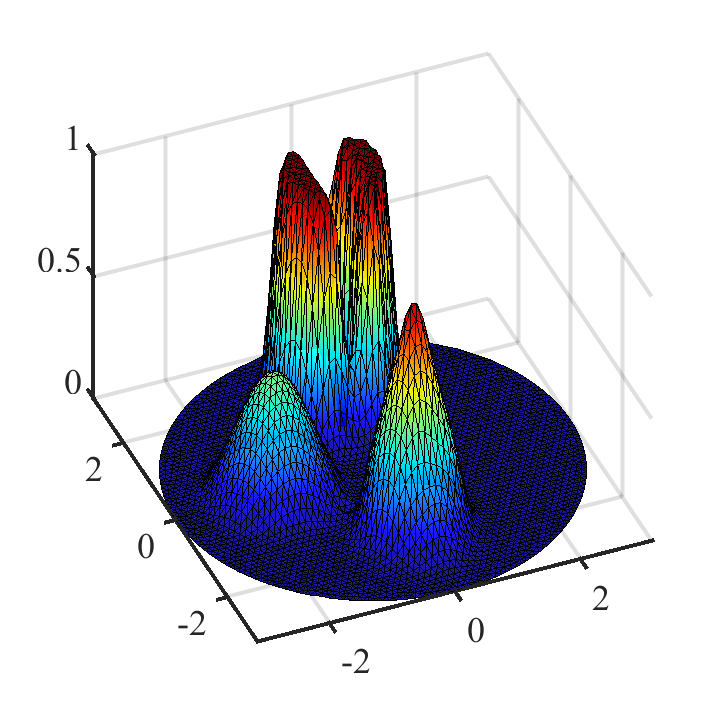}
		\hspace{0in}
		\includegraphics[scale=0.27]{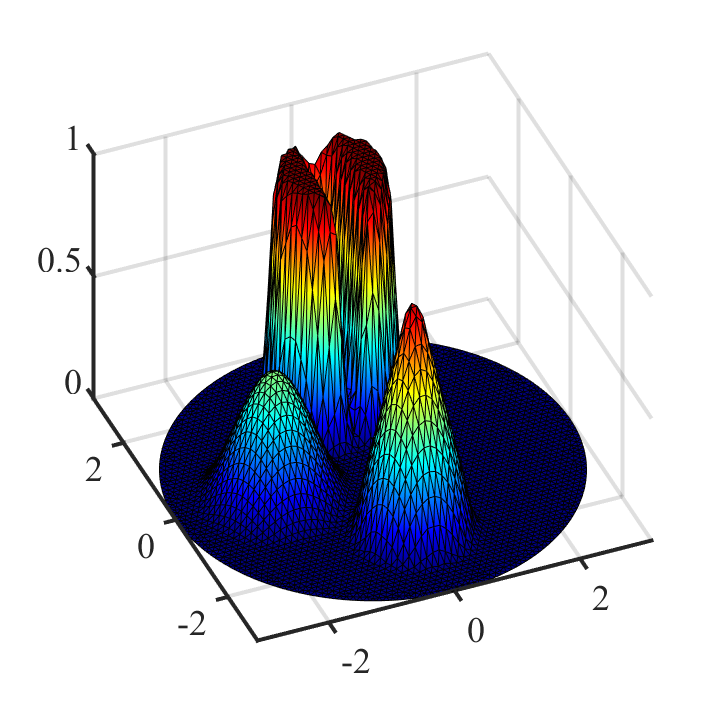}
		\hspace{0in}
		\includegraphics[scale=0.27]{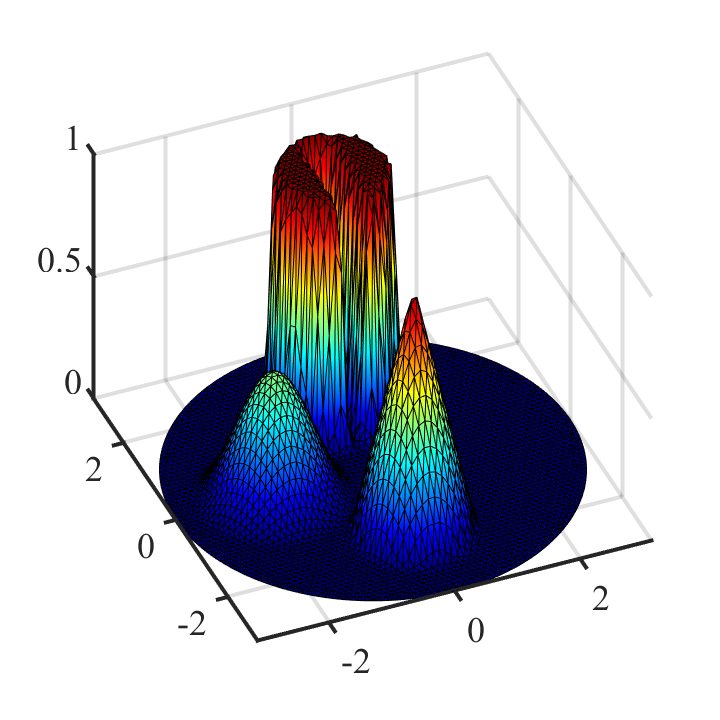}
		\caption{\textbf{Rigid body rotation.} The numerical solution of RKDG with $k=1$(left top) and $k=2$(right top) and of SLDG with $k=1$(left bottom) and $k=2$(right bottom) of the shapes after ten full rotations on circle computational domain with unstructured mesh of 7432 elements.}
		\label{weno_circle}
	\end{figure}
	%多转几圈让左上图看不出来大概形状。
	We can see from Fig.\ref{weno_circle}, these methodologies are capable of managing intricate solution architectures, notwithstanding the emergence of slight oscillations in proximity to discontinuities and by the WENO limiter proposed above is applied, the non-physical oscillations are suppressed. Comparing the result solved with $k=1$ and $k=2$ and the result solved by RKDG and SLDG schemes, we are able to notice that the $P^2$ SLDG scheme can better resolve the solution structures, especially the slotted disk and cone.\\
	Since the velocity field $\boldsymbol{V}(x, y, t)$ in our numerical experiments are divergence free and the slotted disk, cone and smooth hump are all non-negative in the computational domain, then the solution always stays non-negative as time evolves. 
    We applied a PP limiter in Appendix \ref{PP_limiter} in order to preserve positivity of numerical solutions and we compare the results of rigid body rotation with and without the PP limiter, and we mark the elements where functions have negative value as red. We can observe from Fig.\ref{pp1} that once the PP limiter is applied, the positivity of the numerical solution is guaranteed. 
	\begin{figure}[htbp]
		\centering
		\includegraphics[scale=0.27]{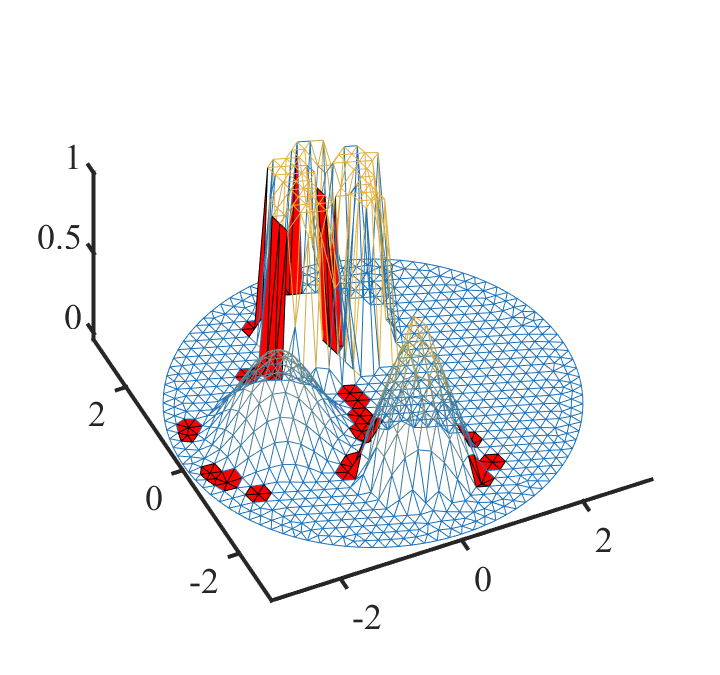}
		\hspace{-0.37in}
		\includegraphics[scale=0.27]{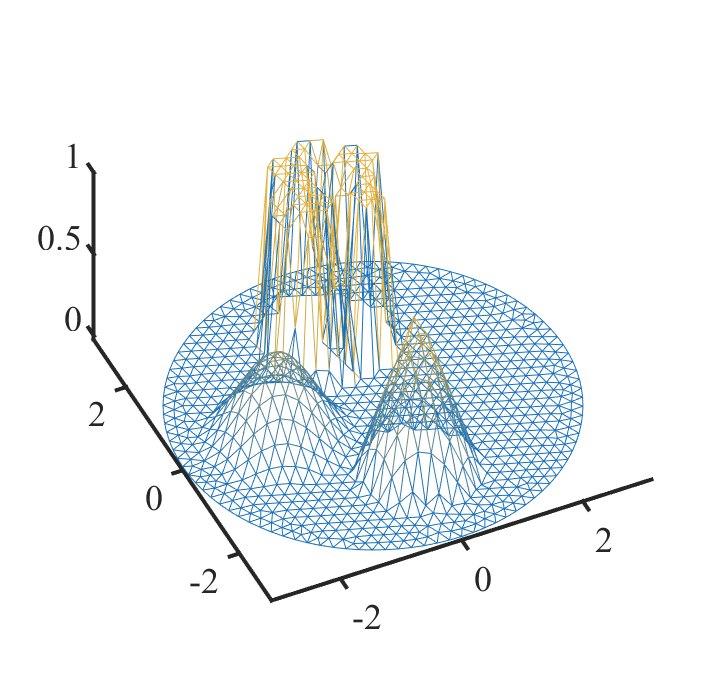}
		\caption{\textbf{Rigid body rotation.} The numerical solution in $P^1$ without PP limiter(left) and with the limiter(right) of the slotted disk, cone and smooth hump after a full rotation on circle computational domain with unstructured mesh of 1884 elements.}
		\label{pp1}
	\end{figure} 
	
	\section{Conclusions} \label{section:conclusion}

We present a high-order conservative SLDG scheme that 
(i) employs  curvilinear upstream elements with  an robust intersection-based remapping algorithm; 
(ii) proves mass conservation and third-order consistency on unstructured meshes;
 and (iii) incorporates WENO and positivity-preserving limiting without degrading conservation and high order accuracy.
  Numerical results show third-order convergence for manufactured and benchmark transports, machine-precision mass conservation, stable large-CFL evolution for the linear problem, and clear accuracy gains of curvilinear over straight-sided upstream elements. 
%The proposed  intersection-based remapping algorithm is very robust by  testing the swirling deformation problems. 
%Thus it can be served as a promising tool for the arbitrary-Lagrangian-Eulerian method. 
	
%	\input{appendix.tex}

%	\input{Introduction.tex}
%	
%	\input{Model.tex}
%	
%	\input{Method.tex}
%	
%	\input{Analysis.tex}
%	
%	\input{Numerics.tex}
%	
%	\input{Conclusion.tex}
%	
%	\input{Appendix.tex}
	
	\bibliographystyle{siamplain}
	\bibliography{ref.bib}
\end{document}

% --- supplement: supplement.tex ---

\maketitle

\footnotetext[1]{\funding{The work of the first author was partially supported by the NSFC (No 12201052), National Key Laboratory for Computational Physics (6142A05230201), the Guangdong Provincial Key Laboratory of IRADS (2022B1212010006), Guangdong basic and applied basic research foundation[2025A1515012182].}}
\footnotetext[2]{Research Center of Mathematics, Advanced Institute of Natural Sciences, Beijing Normal University, Zhuhai, 519087, P.R. China. (\email{xfcai@bnu.edu.cn}).}
\footnotetext[3]{Guangdong Provincial/Zhuhai Key Laboratory of Interdisciplinary Research and Application for Data Science, Beijing Normal-Hong Kong Baptist University, Zhuhai, 519087, P.R. China.}
\footnotetext[4]{Institute of Applied Physics and Computational Mathematics and National Key laboratory of Computational physics, and Center for Applied Physics and Technology, Peking University, P.O. Box 8009, Beijing 100088, P.R. China.}
\footnotetext[5]{Hong Kong Baptist University, Kowloon Tong, Hong Kong. (\email{fukunkai@gmail.com}).}
\footnotetext[6]{Institute of Applied Physics and Computational Mathematics and National Key laboratory of Computational physics, P.O. Box 8009, Beijing 100088, P.R. China.}

\section{Positivity preserving limiter}
\label{PP_limiter}\quad
After updating the numerical solution $u^{n+1}|K_j\in V_h^k$, we extend a positivity-preserving limiter \cite{zhang2011maximum} to the high-order reconstructed polynomials. This ensures the results remain positive for certain problems while maintaining both conservation and high-order accuracy. We let $\overline{u}$ is the cell-average of solution $u(x,y)$ on $K_j$ and we modify the polynomials $u(x,y)$ by the following formula 
		\begin{equation}\label{pp}
				\begin{array}{l}
						\hat{u}(x,y)=\theta\left(u(x,y)-\overline{u}\right)+\overline{u}, \\
						\theta=\min \left\{1,\left|\frac{\overline{u}-\epsilon}{\overline{u}-v}\right|\right\}, \quad v=\min\limits_{(x,y)\in K_j} u(x,y),
					\end{array}
			\end{equation}
		where $\epsilon$ is a small number which satisfies $\overline{u}\geqslant\epsilon$, for implemention, we take $\epsilon=10^{-15}$.
		
		%\begin{proof}
		%			Since $v=\min\limits_{(x,y)\in K_j} u(x,y)$ and $\overline{u}\geqslant\epsilon$, there are two cases:\\
		%			If $v\geqslant\epsilon>0$, then $\overline{u}-\epsilon\geqslant\overline{u}-v$, so we have $\theta=1$ and $\hat{u}(x,y)=u(x,y)\geqslant\epsilon$ for all $(x,y)\in K_j$\\
		%			If $v\leqslant\epsilon$, then $\theta=\frac{\overline{u}-\epsilon}{\overline{u}-v}$ and we have
		%			$$\begin{array}{rl}
				%				\hat{u}(x,y)=&\theta u(x,y)+(1-\theta)\overline{u}\\
				%				=&\frac{\overline{u}-\epsilon}{\overline{u}-v}u(x,y)+\frac{\epsilon-v}{\overline{u}-v}\overline{u}\\
				%				\geqslant&\frac{\overline{u}-\epsilon}{\overline{u}-v}v+\frac{\epsilon-v}{\overline{u}-v}\overline{u}\\
				%				=&\epsilon>0, \quad (x,y)\in K_j.
				%			\end{array}$$
		%			So the modified polynomials remains positive and $\frac{1}{|K_j|}\int_{K_j}\hat{u}(x,y)\mathrm{d}x\mathrm{d}y=\overline{u}$.
		%		\end{proof}

For triangular element, the minimum point lies on extreme point or vertices of triangle. We first calculate the extreme point by $
\left\{
\begin{array}{l}
	\partial_x u=0, \\
	\partial_y u=0,
\end{array}
\right.
$ and compare its value with values of vertices to have $v=\min\limits_{(x,y)\in K_j} u(x,y)$.\\
For curvilinear triangular element, similarly, the minimum point lies on extreme point or boundary of element, we can judge if the extreme point inside the curvilinear triangle following the procedures in \ref{vertexes}.

\section{Proof of \eqref{inters}}
\label{appendixA}\quad
We review the definition of $"+"$ and $"-"$ group, and we know that $\left\{  
\begin{array}{l}
	\cup_kA_k^+=A\cup \left(\cup_kA_k^-\right)\\  
	A\cap \left(\cup_kA_k^-\right)=\emptyset\\
	\left(\cup_kA_k^+\right)\cap \left(\cup_kA_k^-\right)=\cup_kA_k^-
\end{array}  
\right.$ for the divided unit A, which is similar to B.\\ 
Therefore, the area of units A and B satisfy $\left\{  
\begin{array}{l}
	\|A\|=\sum\limits_k\|A_k^+\| - \sum\limits_l\|A_l^-\|,\\  
	\|B\|=\sum\limits_k\|B_k^+\| - \sum\limits_l\|B_l^-\|.
\end{array}  
\right.$\\
According to the allocation law of sets
$$ 
\begin{array}{rl}
	&\left(A^-\cap B^+\right)\cap \left(A^+\cap B^-\right)=\left(A^-\cap \left(B\cup B^-\right)\right)\cap \left(\left(A\cup A^-\right)\cap B^-\right)\\
	=&\left[\left(A^-\cap B\right)\cup \left(A^-\cap B^-\right)\right]\cap \left[\left(A\cap B^-\right)\cup \left(A^-\cap B^-\right)\right]\\
	=&\left\{\left[\left(A^-\cap B^-\right)\cap \left(A^-\cap B\right)\right]\cup\left[\left(A^-\cap B^-\right)\cap \left(A^-\cap B^-\right)\right]\right\}\\
	&\cup \left\{\left[\left(A\cap B^-\right)\cap \left(A^-\cap B\right)\right]\cup\left[\left(A\cap B^-\right)\cap \left(A^-\cap B^-\right)\right]\right\}\\
	=&\left(A\cap A^-\cap B\cap B^-\right)\cup \left(A\cap A^-\cap B^-\right)\cup \left(B\cap B^-\cap A^-\right)\cup \left(A^-\cap B^-\right)\\
	=&\left(\emptyset\cap \emptyset\right)\cup \left(\emptyset\cap B^-\right)\cup \left(\emptyset\cap A^-\right)\cup \left(A^-\cap B^-\right)\\
	=&A^-\cap B^-.
\end{array}$$
%	$$ 
%	\left(A^-\cap B^+\right)\cap \left(A^+\cap B^-\right)=A^-\cap B^-.
%	$$
By applying this conclusion to each partition in \eqref{inters}, the theorem can be proved.

\section{Proof of Lemmas}\label{appendixB}
\subsection{Proof of Lemma \ref{lemma_derivative}}\label{proof_lemma_derivative}\quad
Taking the derivative of both side of equation \eqref{parametric_equation} with respect to $\theta$, since $t$ is independent of parameter $\theta$, we can interchange the order of integration and differentiation, so we have
\begin{equation}\label{derivative2}
	\left[\begin{matrix}
		\frac{\mathrm{d} \mathcal{X}(\theta)}{\mathrm{d} \theta} \\
		\frac{\mathrm{d} \mathcal{Y}(\theta)}{\mathrm{d} \theta} \\
	\end{matrix} \right]=\left[\begin{matrix}
		\frac{\mathrm{d} x(\theta)}{\mathrm{d} \theta} \\
		\frac{\mathrm{d} y(\theta)}{\mathrm{d} \theta} \\
	\end{matrix} \right]+\left[\begin{matrix}
		\int_{t^{n+1}}^{t^n}\frac{\mathrm{d}}{\mathrm{d} \theta}a\Big(X\big(x(\theta),y(\theta);t\big),Y\big(x(\theta),y(\theta);t\big),t\Big)dt \\
		\int_{t^{n+1}}^{t^n}\frac{\mathrm{d}}{\mathrm{d} \theta}b\Big(X\big(x(\theta),y(\theta);t\big),Y\big(x(\theta),y(\theta);t\big),t\Big)dt \\
	\end{matrix} \right].
\end{equation}
From the parametric equation of $\overline{v_1v_2}$, the first term of the R.H.S of \eqref{derivative2} is $\left[\begin{matrix}
	\Delta x \\
	\Delta y \\
\end{matrix} \right]$, the total derivative of $a(X,Y,t)$ respect to $\theta$ is 
\begin{equation}\label{total_derivative}
	\frac{\mathrm{d}a\left(X,Y,t\right)}{\mathrm{d} \theta}=\frac{\partial a\left(X,Y,t\right)}{\partial X}\frac{\mathrm{d} X}{\mathrm{d} \theta}+\frac{\partial a\left(X,Y,t\right)}{\partial Y}\frac{\mathrm{d} Y}{\mathrm{d} \theta}+\frac{\partial a\left(X,Y,t\right)}{\partial t}\frac{\mathrm{d} t}{\mathrm{d} \theta}.
\end{equation}
The total derivative of $b(X,Y,t)$ respect to $\theta$ is similar to equation \eqref{total_derivative}. Since $t$ is independent of parameter $\theta$, $\frac{\partial t}{\partial \theta}=0$. For simplicity, we let $$\left[\begin{matrix}
	a'_1&a'_2 \\
	b'_1&b'_2 \\
\end{matrix} \right]=\left[\begin{matrix}
	\frac{\partial a\left(X,Y,t\right)}{\partial X}&\frac{\partial a\left(X,Y,t\right)}{\partial Y} \\
	\frac{\partial b\left(X,Y,t\right)}{\partial X}&\frac{\partial b\left(X,Y,t\right)}{\partial Y} \\
\end{matrix} \right].$$
For explicitly given velocity function $a$ and $b$, we might able to have $\left[\begin{matrix}
	a'_1&a'_2 \\
	b'_1&b'_2 \\
\end{matrix} \right]$ explicitly. But $X\big(x(\theta),y(\theta);t\big)$ and $Y\big(x(\theta),y(\theta);t\big)$ are obtained by solving the ordinary differential equation \eqref{characteristic}, making them difficult to be expressed explicitly, thus hard to calculate the integral over $[t^{n+1},t^n]$. To obtain that, since $a(x,y,t)\in C^\infty$, we perform a Taylor expansion of $\frac{\mathrm{d}a\left(X,Y,t\right)}{\mathrm{d} \theta}$ at $\big(x(\theta),y(\theta);t^n\big)$,
\begin{equation}
	\frac{\mathrm{d}a\left(X,Y,t\right)}{\mathrm{d} \theta}=\frac{\mathrm{d}a\left(X,Y,t^n\right)}{\mathrm{d} \theta}+(t-t^n)\frac{\mathrm{d}^2a\left(X,Y,t^n\right)}{\mathrm{d} \theta \mathrm{d}t}+O(\Delta t^2).
\end{equation}
So the integral of $\frac{\mathrm{d}a}{\mathrm{d}\theta}$ over $[t^{n+1},t^n]$ is $$\int_{t^{n+1}}^{t^n}\frac{\mathrm{d}a}{\mathrm{d} \theta}dt=(t^n-t^{n+1})\frac{\mathrm{d}a\left(X,Y,t^n\right)}{\mathrm{d} \theta}+\frac{(t^{n+1}-t^n)^2}{2}\frac{\mathrm{d}^2a\left(X,Y,t^n\right)}{\mathrm{d} \theta \mathrm{d}t}+O(\Delta t^3).$$
By the definition of $\mathcal{X}$, we have $\frac{\mathrm{d}a\left(X,Y,t^n\right)}{\mathrm{d} \theta}=a'_1|_{\big(x(\theta),y(\theta);t^n\big)}\frac{\mathrm{d} \mathcal{X}}{\mathrm{d} \theta}+a'_2|_{\big(x(\theta),y(\theta);t^n\big)}\frac{\mathrm{d} \mathcal{Y}}{\mathrm{d} \theta}$.\\
We have the similar result of $b(X,Y,t)$, combine them with the matrix form and drop the subscript $\big(x(\theta),y(\theta);t^n\big)$ of $\left[\begin{matrix}
	a'_1&a'_2 \\
	b'_1&b'_2 \\
\end{matrix} \right]$ for simplicity, we have
\begin{equation}\label{derivative3}
	\left[\begin{matrix}
		\frac{\mathrm{d} \mathcal{X}(\theta)}{\mathrm{d} \theta} \\
		\frac{\mathrm{d} \mathcal{Y}(\theta)}{\mathrm{d} \theta} \\
	\end{matrix} \right]=\left[\begin{matrix}
		\Delta x \\
		\Delta y \\
	\end{matrix} \right]-\Delta t\left[\begin{matrix}
		a'_1&a'_2 \\
		b'_1&b'_2 \\
	\end{matrix} \right]\left[\begin{matrix}
		\frac{\mathrm{d} \mathcal{X}(\theta)}{\mathrm{d} \theta} \\
		\frac{\mathrm{d} \mathcal{Y}(\theta)}{\mathrm{d} \theta} \\
	\end{matrix} \right]+O(\Delta t^2)\left[\begin{matrix}
		\frac{\mathrm{d}^2a\left(X,Y,t^n\right)}{\mathrm{d} \theta \mathrm{d}t} \\
		\frac{\mathrm{d}^2b\left(X,Y,t^n\right)}{\mathrm{d} \theta \mathrm{d}t} \\
	\end{matrix} \right].
\end{equation}
Since $a(x,y,t),b(x,y,t)\in C^{\infty}$, $\left\lVert\begin{matrix}
	\frac{\mathrm{d}^2a\left(X,Y,t^n\right)}{\mathrm{d} \theta \mathrm{d}t} \\
	\frac{\mathrm{d}^2b\left(X,Y,t^n\right)}{\mathrm{d} \theta \mathrm{d}t} \\
\end{matrix} \right\rVert$ is bounded, so we can drop the high-order term in \eqref{derivative3}, we have
\begin{equation}\label{derivative4}
	\left[\begin{matrix}
		1+\Delta ta'_1&\Delta ta'_2 \\
		\Delta tb'_1&1+\Delta tb'_2 \\
	\end{matrix} \right]\left[\begin{matrix}
		\frac{\mathrm{d} \mathcal{X}(\theta)}{\mathrm{d} \theta} \\
		\frac{\mathrm{d} \mathcal{Y}(\theta)}{\mathrm{d} \theta} \\
	\end{matrix} \right]=\left[\begin{matrix}
		\Delta x \\
		\Delta y \\
	\end{matrix} \right].
\end{equation}
Denote the matrix in \eqref{derivative4} as A, and it is easy to compute the determinant of $A$ that $\det(A)=1+O(\Delta t)$. For a sufficiently small $\Delta t$, $\det(A)\neq0$, so we have the inverse matrix $A^{-1}=\frac{1}{\det(A)}\left[\begin{matrix}
	1+\Delta tb'_2&-\Delta ta'_2 \\
	-\Delta tb'_1&1+\Delta ta'_1 \\
\end{matrix} \right]$. $\Delta x$ and $\Delta y$ are usually small positive numbers for obtaining good approximations, hence we obtain
\begin{equation}
	\left[\begin{matrix}
		\frac{\mathrm{d} \mathcal{X}(\theta)}{\mathrm{d} \theta} \\
		\frac{\mathrm{d} \mathcal{Y}(\theta)}{\mathrm{d} \theta} \\
	\end{matrix} \right]=\frac{1}{\det(A)}\left[\begin{matrix}
		\Delta x+\Delta t\Big(b'_2\Delta x-a'_2\Delta y\Big)\\
		\Delta y+\Delta t\Big(a'_1\Delta y-b'_1\Delta x\Big)\\
	\end{matrix} \right]=\left[\begin{matrix}
		\Delta x+o(\Delta t)\\
		\Delta y+o(\Delta t)\\
	\end{matrix} \right].
\end{equation}

\subsection{Proof of Lemma \ref{lemma_derivative2}}\label{proof_lemma_derivative2}\quad
Similar to the proof of Lemma \ref{lemma_derivative}, taking the k-order derivative of both sides of \eqref{parametric_equation} with respect to $\theta$, we have
\begin{equation}\label{derivative5}
	\left[\begin{matrix}
		\frac{\mathrm{d}^k \mathcal{X}(\theta)}{\mathrm{d} \theta^k} \\
		\frac{\mathrm{d}^k \mathcal{Y}(\theta)}{\mathrm{d} \theta^k} \\
	\end{matrix} \right]=\left[\begin{matrix}
		\frac{\mathrm{d}^k x(\theta)}{\mathrm{d} \theta^k} \\
		\frac{\mathrm{d}^k y(\theta)}{\mathrm{d} \theta^k} \\
	\end{matrix} \right]+\left[\begin{matrix}
		\int_{t^{n+1}}^{t^n}\frac{\mathrm{d}^k}{\mathrm{d} \theta^k}a\Big(X\big(x(\theta),y(\theta);t\big),Y\big(x(\theta),y(\theta);t\big),t\Big)dt \\
		\int_{t^{n+1}}^{t^n}\frac{\mathrm{d}^k}{\mathrm{d} \theta^k}b\Big(X\big(x(\theta),y(\theta);t\big),Y\big(x(\theta),y(\theta);t\big),t\Big)dt \\
	\end{matrix} \right].
\end{equation}
From the parametric equation of $\overline{v_1v_2}$, the first term of the right-hand side of \eqref{derivative5} is $\left[\begin{matrix}
	0 \\
	0 \\
\end{matrix} \right]$.\\
We first consider the second-order total derivative of $a(X,Y,t)$ respect to $\theta$,
\begin{equation}\label{total_derivative2}
	\frac{\mathrm{d}^2a}{\mathrm{d} \theta^2}=\frac{\partial^2 a}{\partial X^2}\left(\frac{\mathrm{d}X}{\mathrm{d}\theta}\right)^2+2\frac{\partial^2 a}{\partial X\partial Y}\frac{\mathrm{d}X}{\mathrm{d}\theta}\frac{\mathrm{d}Y}{\mathrm{d}\theta}+\frac{\partial^2 a}{\partial Y^2}\left(\frac{\mathrm{d}Y}{\mathrm{d}\theta}\right)^2+\frac{\partial a}{\partial X}\frac{\mathrm{d}^2X}{\mathrm{d}\theta^2}+\frac{\partial a}{\partial Y}\frac{\mathrm{d}^2Y}{\mathrm{d}\theta^2}.
\end{equation}
We still perform a Taylor expansion of $\frac{\mathrm{d}^2a\left(X,Y,t\right)}{\mathrm{d} \theta^2}$ at $\big(x(\theta),y(\theta);t^n\big)$ and drop the high-order term, we have
$$
\frac{\mathrm{d}^2a\left(X,Y,t\right)}{\mathrm{d} \theta^2}=\frac{\mathrm{d}^2a\left(X,Y,t^n\right)}{\mathrm{d} \theta^2}
$$
So the integral of $\frac{\mathrm{d}^2a}{\mathrm{d}\theta^2}$ over $[t^{n+1},t^n]$ is $$\int_{t^{n+1}}^{t^n}\frac{\mathrm{d}^2a}{\mathrm{d} \theta^2}dt=(t^n-t^{n+1})\frac{\mathrm{d}^2a\left(X,Y,t^n\right)}{\mathrm{d} \theta^2}.$$
By the definition of $\mathcal{X}$, we have
\begin{equation}\label{derivative6}
	\frac{\mathrm{d}^2a\left(X,Y,t^n\right)}{\mathrm{d} \theta^2}=a'_{11}\left(\frac{\mathrm{d}\mathcal{X}}{\mathrm{d}\theta}\right)^2+2a'_{12}\frac{\mathrm{d}\mathcal{X}}{\mathrm{d}\theta}\frac{\mathrm{d}\mathcal{Y}}{\mathrm{d}\theta}+a'_{22}\left(\frac{\mathrm{d}\mathcal{Y}}{\mathrm{d}\theta}\right)^2+a'_1\frac{\mathrm{d}^2\mathcal{X}}{\mathrm{d}\theta^2}+a'_2\frac{\mathrm{d}^2\mathrm{Y}}{\mathrm{d}\theta^2}.
\end{equation}
Since $\Delta t\sim\Delta x\sim\Delta y$, by applying the conclusion of Lemma \ref{lemma_derivative}, the first three terms of \eqref{derivative6} is of order $O(\Delta t^2)$.\\
We have the similar result of $b(X,Y,t)$, combine them with the matrix form and we have
\begin{equation}\label{derivative7}
	\left[\begin{matrix}
		\frac{\mathrm{d}^2 \mathcal{X}(\theta)}{\mathrm{d} \theta^2} \\
		\frac{\mathrm{d}^2 \mathcal{Y}(\theta)}{\mathrm{d} \theta^2} \\
	\end{matrix} \right]=-\Delta t\left[\begin{matrix}
		a'_1&a'_2 \\
		b'_1&b'_2 \\
	\end{matrix} \right]\left[\begin{matrix}
		\frac{\mathrm{d}^2 \mathcal{X}(\theta)}{\mathrm{d} \theta^2} \\
		\frac{\mathrm{d}^2 \mathcal{Y}(\theta)}{\mathrm{d} \theta^2} \\
	\end{matrix} \right]+O(\Delta t^2)\delta A,
\end{equation}
where $\delta A\in\mathbb{R}^2$ and $\lVert\delta A\rVert$ is bounded, by rearranging \eqref{derivative7}, we obtain
\begin{equation}\label{derivative8}
	\left[\begin{matrix}
		1+\Delta ta'_1&\Delta ta'_2 \\
		\Delta tb'_1&1+\Delta tb'_2 \\
	\end{matrix} \right]\left[\begin{matrix}
		\frac{\mathrm{d}^2 \mathcal{X}(\theta)}{\mathrm{d} \theta^2} \\
		\frac{\mathrm{d}^2 \mathcal{Y}(\theta)}{\mathrm{d} \theta^2} \\
	\end{matrix} \right]=O(\Delta t^2)\delta A.
\end{equation}
The matrix in equation \eqref{derivative8} is same as matrix in \eqref{derivative4}, we already have the inverse of $A$. Hence, we obtain
\begin{equation}\label{derivative9}
	\left[\begin{matrix}
		\frac{\mathrm{d}^2 \mathcal{X}(\theta)}{\mathrm{d} \theta^2} \\
		\frac{\mathrm{d}^2 \mathcal{Y}(\theta)}{\mathrm{d} \theta^2} \\
	\end{matrix} \right]=\frac{O(\Delta t^2)}{\det(A)}\left[\begin{matrix}
		1+\Delta t\Big(b'_2-a'_2\Big)\\
		1+\Delta t\Big(a'_1-b'_1\Big)\\
	\end{matrix} \right]\delta A=\left[\begin{matrix}
		O(\Delta t^2)\\
		O(\Delta t^2)\\
	\end{matrix} \right].
\end{equation}
By inductive method, the k-order derivative of $\mathcal{X}(\theta)$ and $\mathcal{Y}(\theta)$ can be obtain from the conclusion of lower order derivative, so it is proved in the same manner.

\subsection{Proof of Lemma \ref{lemma_distance}}\label{proof_lemma_distance}\quad
It can easily be proved by applying Lemma \ref{lemma_derivative}. Integrate both sides of \eqref{derivative1} with respect to $\theta$ over $[0,1]$, we have
\begin{equation}\label{upstream1}
	x^\star_2-x^\star_1=\int_{0}^{1}\frac{\mathrm{d}\mathcal{X}(\theta)}{\mathrm{d}\theta}\mathrm{d}\theta=\int_{0}^{1}\Delta x+o(\Delta t)\mathrm{d}\theta=\Delta x+o(\Delta t).
\end{equation}
Since $h\sim\Delta t\sim\Delta x$, we can express $\Delta x+o(\Delta t)$ as $O(h)$. The proof of $y^\star_2-y^\star_1$ is the same as \eqref{upstream1}. The distance between $v^\star_1$ and $v^\star_2$ is
$$d(v^\star_1,v^\star_2)=\sqrt{(x^\star_2-x^\star_1)^2+(y^\star_2-y^\star_1)^2}=\sqrt{O(h^2)}=O(h).$$
The proof of Lemma \ref{lemma_distance} is completed.

\subsection{Proof of Lemma \ref{projection_lemma}}\label{proof_lemma_projection}\quad
By \eqref{projection_vector}, we know  $Pu-u\perp v$ for any $v\in V_h^k$, therefore $Pu-u\perp V_h^k$. From Theorem \ref{projection_theorem}, we have 
\begin{equation}\label{projection1}
	\left\lVert Pu-u\right\rVert\leqslant\left\lVert v-u\right\rVert \text{ for any }v\in V_h^k. 
\end{equation}
Then we consider the Taylor expansion of $u(x,y)$ at center of $K_j$, namely $(x_0,y_0)$,
%\begin{equation}\label{taylor}
%\begin{aligned}
%	u(x,y)&=P_n(x,y)+R_n(x,y)=u(x_0,y_0)+u_x(x_0,y_0)(x-x_0)+u_y(x_0,y_0)(y-y_0)\\
%	+&\frac{1}{2!}[ u_{xx}(x_0,y_0)(x-x_0)^2+2u_{xy}(x_0,y_0)(x-x_0)(y-y_0)+u_{yy}(x_0,y_0)(y-y_0)^2 ]\\
%	+&\frac{1}{3!}[ u_{xxx}(x_0,y_0)(x-x_0)^3+3u_{xxy}(x_0,y_0)(x-x_0)^2(y-y_0)\\
%	&+u_{xyy}(x_0,y_0)(x-x_0)(y-y_0)^2+u_{yyy}(x_0,y_0)(y-y_0)^3 ]+\cdot\cdot\cdot+R_n(x,y),
%\end{aligned}
%\end{equation}
\begin{equation*}\label{taylor}
	\begin{aligned}
		u(x,y)&=P_n(x,y)+R_n(x,y)=u(x_0,y_0)+u_x(x_0,y_0)(x-x_0)+u_y(x_0,y_0)(y-y_0)\\
		& +\cdot\cdot\cdot+R_n(x,y),
	\end{aligned}
\end{equation*}
where $R_n(x,y)$ is the remainder term in the n-order Taylor expansion. We express $R_n(x,y)$ in the form of Lagrange remainder:
\begin{equation}\label{remainder1}
	R_n(x,y)=\sum\limits_{\left|\boldsymbol{\alpha}\right|=n+1}\frac{1}{\boldsymbol{\alpha}!}D^{\boldsymbol{\alpha}}u(\xi,\eta)(x-x_0)^{\alpha_1}(y-y_0)^{\alpha_2},
\end{equation}
where $\boldsymbol{\alpha}=(\alpha_1,\alpha_2)$ is double index, $D^{\boldsymbol{\alpha}}$ is higher order derivative and $(\xi,\eta)$ is an interior point of $K_j$. We consider $m(x,y)=P_k(x,y)\in V_h^k$, then we have
\begin{equation}\label{remainder2}
	\left\lVert m(x,y)-u(x,y) \right\rVert=\left\lVert R_k(x,y) \right\rVert.
\end{equation} 
Since $(x,y)\in K_j$ and $h\sim\Delta x\sim\Delta y$, we have$(x-x_0)^{\alpha_1}(y-y_0)^{\alpha_2}\leqslant h^{k+1}$. Let $C=\left(n+2\right)\max\limits_{\left|\boldsymbol{\alpha}\right|=n+1}\frac{1}{\boldsymbol{\alpha}!}D^{\boldsymbol{\alpha}}u(\xi,\eta)$, and we substitute these into \eqref{remainder1}, equation \eqref{remainder2} becomes $\left\lVert m(x,y)-u(x,y) \right\rVert\leqslant Ch^{k+1}$. From the arbitrary of $v$ in \eqref{projection1}, we have $\left\lVert Pu-u\right\rVert\leqslant\left\lVert m-u\right\rVert\leqslant Ch^{k+1}$.

\subsection{Proof of Lemma \ref{edge_approximate}}\label{proof_lemma_edge}\quad
Recall the isoparametric map \eqref{shapefunction2}, we know that there exist $k+1$ intersection points between $\widehat{v^\star_1v^\star_2}$ and $\overline{v^\star_1v^\star_2}$, we can define them as $(x_i,y_i)$ with parameters $\theta_i$, $i=0,\cdot\cdot\cdot,k$.\\
From Lemma \ref{lemma_derivative} and \ref{lemma_derivative2}, we know the parametric equations \eqref{parametric_equation} of $\widehat{v^\star_1v^\star_2}$ is $(k+1)$-times continuously differentiable, i.e. $\mathcal{X}(\theta),\mathcal{Y}(\theta)\in C^{k+1}([0,1])$.\\
We define the distance between $\mathcal{X}$ and $x^F$ as $E_x^k(\theta)=\mathcal{X}(\theta)-x^F(\theta)$, and we want to show that 
\begin{equation}\label{E_x^k}
	E_x^k(\theta)=\frac{\mathcal{X}^{(k+1)}(\xi)}{(k+1)!}\prod\limits_{i=0}^{k}(\theta-\theta_i)\Delta x.
\end{equation}
We define a function
\begin{equation}\label{G(lambda)}
	G(\lambda)=E_x^k(\lambda)-\frac{E_x^k(\theta)}{\prod\limits_{i=0}^{k}(\theta-\theta_i)\Delta x}\prod_{i=0}^{k}(\lambda-\theta_i)\Delta x=0,\ \ \theta\neq\theta_i.
\end{equation}
When $\lambda=\theta_i,\ \ i=1,\cdot\cdot\cdot,k$, we have $E_x^k(\theta_i)=\mathcal{X}(\theta_i)-x^F(\theta_i)=0$ and $\prod_{i=0}^{k}(\lambda-\theta_i)=0$, therefore $G(\theta_i)=0,\ \ i=1,\cdot\cdot\cdot,k$. Besides, when $\lambda=\theta$, we have $G(\theta)=0$, so $G(\lambda)=0$ has at least $(k+2)$ distinct roots.\\ 
By Rolle's Theorem, we know $G'(\lambda)=0$ has at least $(k+1)$ roots, and they lie between the roots of $G(\lambda)=0$. By induction, $G^{(k+1)}(\lambda)=0$ has at least one root $\xi$ and it lies inside $(0,1)$.\\
$\prod_{i=0}^{k}(\lambda-\theta_i)$, a monic polynomials of degree $(k+1)$, its $(k+1)-$derivative is equal to $(k+1)!$ and $x^F(\lambda)\in P^k$, its $(k+1)-$derivative is equal to $0$. Therefore 
\begin{equation}\label{G^k+1}
	G^{(k+1)}(\xi)=\mathcal{X}^{(k+1)}(\xi)-\frac{(k+1)!}{\prod\limits_{i=0}^{k}(\theta-\theta_i)\Delta x}E_x^k(\theta)=0.
\end{equation}
By rearranging \eqref{G^k+1}, we obtain \eqref{E_x^k}. Since $\theta,\theta_i\in[0,1]$ and $h\sim\Delta x$, we have $\prod\limits_{i=0}^{k}(\theta-\theta_i)\Delta x\leqslant h^{k+1}$. Let $C_1=\frac{\mathcal{X}^{(k+1)}(\xi)}{(k+1)!}$, then we have $$\left\lVert\mathcal{X}(\theta)-x^F(\theta)\right\rVert=\left\lVert E_x^k(\theta)\right\rVert\leqslant C_1h^{k+1}.$$
The proof of $\left\lVert\mathcal{Y}(\theta)-y^F(\theta)\right\rVert$ is similar with $\left\lVert\mathcal{X}(\theta)-x^F(\theta)\right\rVert$, so the equation \eqref{xy_h^k+1} is proved.\\
Let $C=\sqrt{C_1^2+C_2^2}$, then we have $$\int_{0}^{1}\sqrt{\left(\mathcal{X}-x^F\right)^2+\left(\mathcal{Y}-y^F\right)^2}\mathrm{d}\theta\leqslant\int_{0}^{1}\sqrt{(C_1^2+C_2^2)h^{2k+2}}\mathrm{d}\theta=Ch^{k+1},$$
so the equation \eqref{d_h^k+1} is proved.

\section{Proof of Theorem \ref{consistency}}\label{appendixC}\quad
Having the lemmas, we are now in a position to make an error analysis. We assume $\overline{u}^{n+1}(x,y)$ is the discontinuous Galerkin solution computed by exact upstream element. Then we can divide the norm of error into two pieces:
$$\begin{array}{rl}
	\left\lVert u^{n+1}-u(t^{n+1}) \right\rVert&=\left\lVert u^{n+1}-\overline{u}^{n+1}+\overline{u}^{n+1}-u(t^{n+1}) \right\rVert\\
	&\leqslant \left\lVert u^{n+1}-\overline{u}^{n+1} \right\rVert+\left\lVert \overline{u}^{n+1}-u(t^{n+1}) \right\rVert.
\end{array}$$
From Lemma \ref{projection_lemma} and equation \eqref{sl}, we know that for $\forall\Psi\in P^2(K_j)$, we have $$(\overline{u}^{n+1}-u(t^{n+1}),\Psi)=\int_{K_j}\left(\overline{u}^{n+1}-u(t^{n+1})\right)\Psi\mathrm{d}x=0.$$
Therefore, $\overline{u}$ is the projection of $u(t^{n+1})$ on $V_h^2$, then there exists a constant $C_1$, such that
\begin{equation}\label{u(t^n+1)}
	\left\lVert \overline{u}^{n+1}-u(t^{n+1}) \right\rVert_{L^2}\leqslant C_1h^3.
\end{equation}
The case in $L^1$ norm is similar with equation \eqref{u(t^n+1)}. From Lemma \ref{edge_approximate}, we know that there exists a constant $\tilde{C}_2^{(1)}$, such that for the area between $\widehat{v^\star_1v^\star_2}$ and $\overline{v^\star_1v^\star_2}$, we have $d(\widehat{v^\star_1v^\star_2},\overline{v^\star_1v^\star_2})\leqslant \tilde{C}_2^{(1)}h^3$, similar with it, $d(\widehat{v^\star_2v^\star_3},\overline{v^\star_2v^\star_3})\leqslant \tilde{C}_2^{(2)}h^3$ and $d(\widehat{v^\star_3v^\star_1},\overline{v^\star_3v^\star_1})\leqslant \tilde{C}_2^{(3)}h^3$. Let $\Delta K_j^\star$ denotes the domain between exact and approximated upstream element, we let $\tilde{C}_2=\sum\limits_{i=1}^3\tilde{C}_2^{(i)}$, then we have
\begin{equation}
	|\Delta K_j^\star|:=\int_{\Delta K_j^\star}1\mathrm{d}\bm{\theta}=\tilde{C}_2h^3,
\end{equation}
where $\mathrm{d}\bm{\theta}:=\frac{1}{h^2}\mathrm{d}x\mathrm{d}y$, since in the reference element, we have $\mathrm{d}x=\Delta x\mathrm{d}\theta$, $\mathrm{d}y=\Delta y\mathrm{d}\theta$.\\
From \eqref{sl} and the definition of $\overline{u}^{n+1}$ and $\Delta K_j^\star$, we have 
\begin{equation}\label{delta_sl}
	\int_{K_j}\left|u^{n+1}-\overline{u}^{n+1}\right|\Psi \mathrm{d}x\mathrm{d}y=\int_{\Delta K_j^\star}u^n\psi\mathrm{d}x\mathrm{d}y.
\end{equation}
Let $\Psi=1$, from \eqref{adjoint}, we have $\psi=1$, then the equation \eqref{delta_sl} can be written as
\begin{equation}\label{delta_sl2}
	\int_{K_j}\left|u^{n+1}-\overline{u}^{n+1}\right| \mathrm{d}x\mathrm{d}y=\int_{\Delta K_j^\star}u^n\mathrm{d}x\mathrm{d}y.
\end{equation}
Since the analytical solution $u(t^{n})$ at $t^n$ time level is bounded and by the assumption, we have $\left\lVert u^{n}(x,y)-u(x,y,t^{n}) \right\rVert_{L^2}=O(h^3)$, therefore, $u^n(x,y)$ is bounded, we let $C_2=\tilde{C}_2\max\limits_{(x,y)\in \Delta K_j^{\star}}|u^n|$, then we have
$$
\frac{1}{h^2}\int_{K_j}\left|u^{n+1}-\overline{u}^{n+1}\right|\mathrm{d}\bm{\theta}=\frac{1}{h^2}\int_{\Delta K_j^\star}u^n\mathrm{d}\bm{\theta}\leqslant\frac{\max\limits_{(x,y)\in \Delta K_j^{\star}}|u^n|}{h^2}\int_{\Delta K_j^\star}1\mathrm{d}\bm{\theta}=\frac{C_2h^3}{h^2}.
$$
Therefore, we have 
\begin{equation}
	\left\lVert u^{n+1}-\overline{u}^{n+1} \right\rVert_{L^1}\leqslant C_2h^3.
\end{equation}
Then we let $\Psi=\left|u^{n+1}-\overline{u}^{n+1}\right|$, using the similar manner, we can obtain $$\left\lVert u^{n+1}-\overline{u}^{n+1} \right\rVert_{L^2}\leqslant C_2\sqrt{h^6}=C_2h^3.$$
In conclusion, $\left\lVert u^{n+1}(x,y)-u(x,y,t^{n+1}) \right\rVert_{L^2}\leqslant(C_1+C_2)h^3=O(h^3)$.

\section{Finding intersections between arcs}\label{appendixD}\quad
For two arcs with parametric equations:
\begin{equation}\label{parametric_arcs}
	\left\{ 
	\begin{array}{l}
		x_1(\xi)=a_1^x\xi^2+b_1^x\xi+c_1^x,\\  
		y_1(\xi)=a_1^y\xi^2+b_1^y\xi+c_1^y,
	\end{array}  
	\right.\text{ and }\left\{  
	\begin{array}{l}
		x_2(\eta)=a_2^x\eta^2+b_2^x\eta+c_2^x,\\  
		y_2(\eta)=a_2^y\eta^2+b_2^y\eta+c_2^y,
	\end{array}  
	\right.
\end{equation}
where $\xi,\eta\in\left[-1,1\right]$. Intersection points satisfy $\left\{  
\begin{array}{l}
	x_1(\xi)=x_2(\eta)\\  
	y_1(\xi)=y_2(\eta)
\end{array}  
\right.$, this system consists of two quadratic equations, by applying this, we can simplify the system to a single quartic equation:
$$\begin{array}{rl}
	a_1^x\xi^2+b_1^x\xi&=a_2^x\eta^2+b_2^x\eta+c_2^x-c_1^x:=\alpha,\\
	a_1^y\xi^2+b_1^y\xi&=a_2^y\eta^2+b_2^y\eta+c_2^y-c_1^y:=\beta.
\end{array}$$
We treat this system as  $\left[\begin{matrix}
	\xi^2 \\
	\xi \\
\end{matrix} \right]=\left[\begin{matrix}
	a_1^x & b_1^x\\
	a_1^y & b_1^y\\
\end{matrix} \right]^{-1}\left[\begin{matrix}
	\alpha \\
	\beta \\
\end{matrix} \right].$
By organizing coefficients, we have:
$$\begin{array}{rl}
	\xi^2&=A\eta^2+B\eta+C,\\
	\xi&=a\eta^2+b\eta+c.
\end{array}$$
This system yields a quartic equation in $\eta$ that
$$A\eta^2+B\eta+C=(a\eta^2+b\eta+c)^2.$$
So we find the intersection points of arcs by solving the equation.

\section{Determine if a vertex is inside a parabolic segment}\label{appendixE}\quad
We need to define the direction of the domain. A positive area is obtained from the contour integral under the assumption that the boundary of it has a counter-clockwise orientation.\\
\begin{figure}[ht]
	\centering
	\includegraphics*[width=0.32\linewidth]{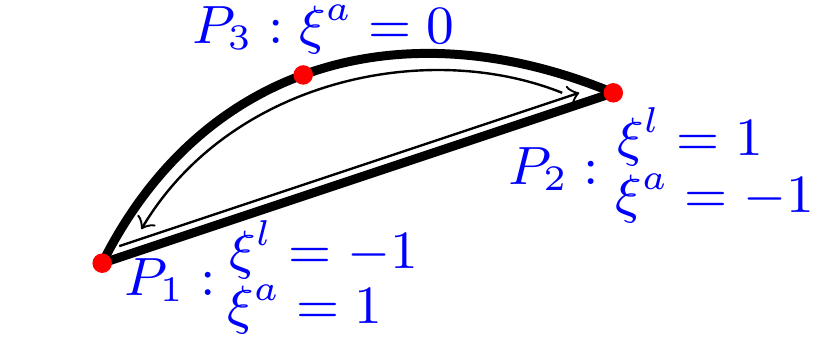}
	\caption{The parameter definition of a parabolic segment and along the direction of the arrow, which is the induced orientation, the parameters change from -1 to 1.}
	\label{parabolic_segment}
\end{figure}%The boundary comprises two distinct portions, see Fig.\ref{parabolic_segment}: a linear segment designated as base and an arc with $\xi^a$ as a parameter for quadratic polynomial. Three points $P_1,P_2,P_3$ in the physical space are sufficient to uniquely identify the arc. The points $P_1$, $P_2$ serve as termination points of the base, with the parabolic segment maintaining a leftward position during the transition from $P_1$ to $P_2$.\\
We choose the convention, showing in Fig.\ref{parabolic_segment} for parameterizing the arc and base to ensure the parameters vary from $-1$ to $1$ along the induced orientations of the base and the arc, respectively.\\
%It is a significant attribute of an arc which is utterly contained in coincident half-plane relative about the base.\\
Having the parametric description of parabolic segments, we can list the scenarios where the vertices are inside and outside a general parabolic segment $S$ in Fig.\ref{intersection_point}.
\begin{figure}[ht]
	\centering
	\includegraphics*[width=0.94\linewidth]{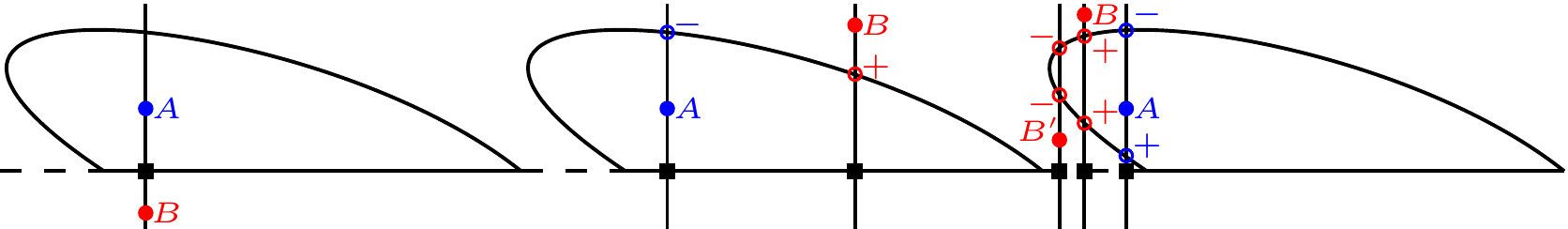}
	\caption{The method to determine if a point is inside the parabolic segment, the red point $B$ is outside the parabolic segment and the blue point $A$ is inside the parabolic segment in all panels, the square points represent the intersection between base of the segment and vertical line, along with hollow circles represent the intersection between curved edge of the segment and vertical line.}
	\label{intersection_point}
\end{figure}
\begin{enumerate}
	\item \textbf{Left panel:} The initial examination involves determining whether point is situated in the same half-plane as point $P_3$ of the arc associated with $S$.
	\item \textbf{Middle panel:} Next, we project the object onto the line that is established by the base of $S$. If the projection belongs to the base. We parameterize the vertical line by setting the parameter to zero at the point, and to a positive value at the projection. Afterwards, at $A$, the hollow-circle marked intersection of the arc and vertical line maps towards a negative parameter, while $B$ corresponding to a positive parameter.
	\item \textbf{Right panel:} If the projection is located beyond the base. For point $A$ located within $S$, the intersections are situated on opposing sides of $A$, which is indicative of a negative product of the parameters. In the case of $B$ and $B'$ that are external to $S$ yield positive products of the parameters.
\end{enumerate}

\section{Clipping algorithm}\label{appendixF}\quad
\begin{algorithm}[ht]
	\captionof{algorithm}{Clipping Between Convex Partition}\label{alg:clipping}
	\begin{algorithmic}[1]
		\Statex \textbullet~\textbf{given:} Convex partition P and Q.
		\Statex \textbullet~\textbf{requirement 1:} Find the vertices between the convex domains of Lagrangian and Eulerian element.
		\Statex \textbullet~\textbf{requirement 2:} Sort the vertices of intersection counter-clockwise.
		\Statex \textbullet~\textbf{returns:} A set of reordered vertices.
		\For {edge $P_i\in P$ and $Q_j\in Q$}
		\State Organize the coefficient of polynomial as in \ref{vertexes}(1).
		\State $\xi\gets$ roots of the polynomial.
		\If{$\xi \in [0,1]$}
		\State $\eta\gets$ any of the equation \eqref{parametric_arcs}
		\If{$\eta \in [0,1]$}
		\State The point is a vertex $I_i$ of intersection.
		\EndIf
		\EndIf
		\EndFor
		\For {vertex $p_i\in P$}
		\If{$p_i$ and the arc lie on the same half-plane}
		\State $V\gets$ perpendicular line through $p_i$.
		\State $\xi_{1,2}\gets$ intersection points of $V$ and arc.
		\If{$\xi_1$ and $\xi_2$ on the arc}
		\If{$\xi_1\cdot\xi_2<0$}
		$p_i$ is a vertex $I_i$ of intersection.
		\EndIf
		\Else
		\If{$\xi_1<0$}
		$p_i$ is a vertex $I_i$ of intersection.
		\EndIf
		\EndIf
		\EndIf
		\EndFor
		\State $O\gets \sum_{i=1}^N I_i/N$, $\bm{v}_0\gets$ any direction, $\bm{v}_i\gets \mathop{OI_i}\limits^{\rightarrow}$.
		\State $\theta_i\gets\arccos\left(\bm{v}_0\cdot\bm{v}_i/|\bm{v}_0||\bm{v}_i|\right)$, sort \{$I_i$\} by $\theta_i$.
	\end{algorithmic}
\end{algorithm}

\bibliographystyle{siamplain}
\bibliography{ref.bib}